\documentclass[review,onefignum,onetabnum]{siamonline190516}
\usepackage{amssymb}
\usepackage{amsmath}
\usepackage{amsfonts}

\usepackage{lipsum}
\usepackage{mathrsfs}
\usepackage{amsfonts}
\usepackage{graphicx}
\usepackage{epstopdf}
\usepackage{algorithmic}
\ifpdf
  \DeclareGraphicsExtensions{.eps,.pdf,.png,.jpg}
\else
  \DeclareGraphicsExtensions{.eps}
\fi

\usepackage{enumitem}
\setlist[enumerate]{leftmargin=.5in}
\setlist[itemize]{leftmargin=.5in}

\newsiamremark{remark}{Remark}
\newsiamremark{hypothesis}{Hypothesis}
\crefname{hypothesis}{Hypothesis}{Hypotheses}
\newsiamthm{claim}{Claim}

\title{The monotonicity method for the inverse elastic scattering on unbounded domains \thanks{Submitted to the editors DATE.
}
}

\author{
Bastian Harrach\thanks{Institute for Mathematics, Goethe-University Frankfurt, Frankfurt am Main, Germany (\email{harrach@math.uni-frankfurt.de}).} \and
Jianli Xiang \thanks{Corresponding author: Three Gorges Mathematical Research Center, College of Mathematics and Physics, China Three Gorges University, Yichang 443002, China (\email{xiangjianli@ctgu.edu.cn}).}
}

\usepackage{amsopn}

\ifpdf
\hypersetup{
  pdftitle={Inverse medium scattering problem},
  pdfauthor={Jianli Xiang}
}
\fi

\nolinenumbers

\begin{document}

\maketitle

\begin{abstract}
  We discuss a time-harmonic inverse scattering problem for the Navier equation with compactly supported penetrable and possibly inhomogeneous scattering objects in an unbounded homogeneous background medium, and we develop a monotonicity relation for the far field operator that maps superpositions of incident plane waves to the far field patterns of the corresponding scattered waves. Combining the monotonicity relation with the method of localized potentials, we extend the so called monotonicity method to characterize the support of inhomogeneities in the Lam\'{e} parameters and the density in terms of the far field operator.
\end{abstract}

\begin{keywords}
  Monotonicity method, inverse scattering, Navier equation, far field operator, inhomogeneous medium
\end{keywords}

\begin{AMS}
  35J20, 35P25, 35R30, 45Q05
\end{AMS}

\section{Introduction}

The wave scattering problem is an important research direction in the inverse problem of partial differential equations, which has been widely used in engineering fields such as nondestructive testing, environmental science, geophysical exploration and medical diagnosis. While the well-posedness of the direct scattering problem has been thoroughly investigated through the integral equation and variational methods, the inverse problem has also attracted a wide variety of extensive and intensive investigations \cite{Colton2019}.

The reconstruction of the position and shape of unknown scatterers from the far field data is a fundamental but severely ill-posed problem in inverse scattering problems. In the past two decades, efficient qualitative reconstruction algorithms have received widespread attention, and there are two representative non-iterative methods: decomposition methods and sampling methods. Decomposition methods include the dual space method \cite{Colton2019} and the point source method \cite{Potthast2001}, and sampling methods include the singular sources method \cite{Potthast2000}, the probe method \cite{Ikehata1998}, the linear sampling method \cite{Colton1996} and the factorization method \cite{Kirsch2008}, whose main idea is to construct a certification associated with measurement data to detect the targeted object. Among qualitative methods for shape reconstruction, the monotonicity method has been recently introduced by Harrach in \cite{Harrach2013} for the electrical impedance tomography. It is formulated in terms of far field operators that map superpositions of incident plane waves, which are being scattered at the unknown scattering objects, to the far field patterns of the corresponding scattered waves. Comparing with the factorization method \cite{Kirsch2008}, the general theorem of the monotonicity method does not assume that the real part of the middle operator of the far field operator has a decomposition into a positive coercive operator and a compact operator, which means that the monotonicity method generates reconstruction schemes under weaker a priori assumptions for unknown targets \cite{Furuya2021}.

In \cite{Lakshtanov2016}, Lakshtanov and Lechleiter have generalized the factorization method for inverse medium scattering using a particular factorization of the difference of two far field operators and obtained a monotonicity principle which yields a simple algorithm to
compute upper and lower bounds for boundary values. Therefore, the monotonicity method is closely related to the factorization method. Very recently, the monotonicity analysis from \cite{Harrach2013} has been extended to inverse coefficient problems for the Helmholtz equation in a bounded domain for fixed nonresonance frequency and real-valued scattering coefficient function \cite{Harrach2019,HarrachP2019}, where the authors have shown a monotonicity relation between the scattering coefficient and the local Neumann-to-Dirichlet operator. Combining this with the method of localized potentials, they have derived a constructive monotonicity based
characterization of scatterers from partial boundary data \cite{Harrach2019} and improved the bounds for the space dimension \cite{HarrachP2019}. Then, Griesmaier and Harrach \cite{Griesmaier2018} have made a generalization of these results to the inverse medium scattering problem on unbounded domains with plane wave incident fields and far field observations of the scattered waves. Furthermore, the monotonicity method has also been extended to the inverse mixed obstacle scattering \cite{Albicker2020}, an inverse Dirichlet crack detection \cite{Daimon2020}, an open periodic waveguide \cite{Furuya2020}, a closed cylindrical waveguide \cite{Arens2023} and the references therein \cite{Albicker2023,Furuya2021,Harrach2023}.

Concerning the isotropic linear elasticity in the stationary case, the monotonicity result between the Lam\'e parameters and the Neumann-to-Dirichlet operator and the existence of localized potentials has been presented in \cite{Eberle2020}, which has been applied to detect and reconstruct inclusions based on the standard as well as linearized monotonicity tests in \cite{Eberle2021,Eberle2023,EberleM2021}. To make a significant improvement over standard regularization techniques, Eberle and Harrach have dealed with the same problem by the monotonicity-based regularization method \cite{Eberle2022}. For the non-stationary or time harmonic case of the Navier equation, the paper \cite{Eberle2024} has extended the monotonicity method for inclusion detection and shown how to determine certain types of inhomogeneities in the Lam\'e parameters and the density. The main contribution of the present work is the generalization of the monotonicity method to the time-harmonic inverse elastic scattering problem on unbounded domains. Our approach relies on the monotonicity of the far field operator with respect to the Lam\'e parameters as well as the density and the techniques of localized potentials.

The outline of this article is as follows. After briefly introducing the mathematical setting of the scattering problem in Section 2, we develop the monotonicity relation for the far field operator in Section 3. In Section 4 we discuss the existence of localized wave functions for the Navier equation in unbounded domains, and we use them to provide a converse of the monotonicity relation from Section 3. In Section 5 we establish rigorous characterizations of the support of scattering objects in terms of the far field operator.

\section{Problem formulation}

In this paper, we consider the inverse medium scattering problem of time-harmonic elastic waves and deal with the shape reconstruction problem, which is also known as the detection problem. Assume that the propagation of time-harmonic waves is in an isotropic non-absorbing and inhomogeneous elastic medium with the density function $\rho$ and Lam\'{e} constants $\mu$ and $\lambda$ satisfying $\mu>0$, $\mu+\lambda>0$ in $\mathbb{R}^{2}$. We are specifically interested in determining the region where the material properties $\lambda$, $\mu$ or $\rho$ differ from some known constant background values $\lambda_{0}$, $\mu_{0}$ or $\rho_{0}$, when given a set of far field measurements in the form of the far field operator. This problem corresponds physically to determining the inhomogeneous regions in the body $\Omega$ from far field measurements. Here we assume that $\lambda_{0}$, $\mu_{0}$, $\rho_{0}>0$ are some known constants, and there is a jump in the material parameters
\begin{align*}
&\lambda=\lambda_{0}+\chi_{D_{1}}\psi_{1},\quad D_{1}\subseteq\Omega, \\
&\mu=\mu_{0}+\chi_{D_{2}}\psi_{2},\quad D_{2}\subseteq\Omega, \\
&\rho=\rho_{0}-\chi_{D_{3}}\psi_{3},\quad D_{3}\subseteq\Omega,
\end{align*}
where $\chi_{D_{j}}$ $(j=1,2,3)$ are the characteristic functions of the sets $D_{j}$ and $\psi_{j} |_{D_{j}}\in L_{+}^{\infty}(D_{j}):=\{\psi\in L^{\infty}(D_{j}),\, {\rm ess\,inf}_{D_{j}}\psi>0\}$, so that there is a jump in the material parameters at the boundaries $\partial D_{j}$ of the regions where the material parameters differ from the background values.

The scattering problem we are dealing with is modeled by the following Navier equation:
\begin{equation}\label{a}
\Delta^{*}_{\lambda,\mu} u_{c}+\rho\omega^{2}u_{c}=0, \quad {\rm in} \quad \mathbb{R}^{2}\quad (c:=(\lambda,\mu,\rho)).
\end{equation}
The circular frequency $\omega>0$ and $\Delta^{*}_{\lambda,\mu}$ denotes the Lam\'{e} operator $\mu\Delta+(\mu+\lambda)\nabla(\nabla\cdot)$. Here, $u_{c}=u^{{\rm in}}+u_{c}^{{\rm sc}}$ is the total displacement field, which is a superposition of the given incident plane wave $u^{{\rm in}}$ and the scattered wave $u^{{\rm sc}}_{c}$. By the Helmholtz decomposition theorem, the scattered field $u^{{\rm sc}}_{c}$ can be decomposed as $u^{{\rm sc}}_{c}=u_{p}+u_{s}$, where $u_{p}$ denotes the compressional wave and $u_{s}$ denotes the shear wave, $k_{p}$ is the compressional wave number and $k_{s}$ is the shear wave number. They are given by the following forms respectively:
\begin{equation*}
u_{p}:=u_{p}(\lambda,\mu,\rho) :=-\frac{1}{k_{p}^{2}}\,\nabla(\nabla\cdot u_{c}^{{\rm sc}}),\quad \quad u_{s}:= u_{s}(\lambda,\mu,\rho):=\frac{1}{k_{s}^{2}}\,\overrightarrow{{\rm curl}}\,{\rm curl}\,u_{c}^{{\rm sc}},
\end{equation*}
\begin{equation*}  
k_{p}:=k_{p}(\lambda,\mu,\rho):=\omega\sqrt{\frac{\rho}{2\mu+\lambda}},\quad \quad
k_{s}:=k_{s}(\lambda,\mu,\rho):=\omega\sqrt{\frac{\rho}{\mu}},
\end{equation*}
with
\begin{equation*}
\nabla\cdot u:=\frac{\partial u_{1}}{\partial x_{1}}+\frac{\partial u_{2}}{\partial x_{2}},\quad {\rm curl}\,u=\nabla^{\bot}\cdot u:=\frac{\partial u_{2}}{\partial x_{1}}-\frac{\partial u_{1}}{\partial x_{2}},\quad \overrightarrow{{\rm curl}}:=\left(\frac{\partial}{\partial x_{2}},-\frac{\partial}{\partial x_{1}}\right)^{\top}, \quad u=[u_{1},u_{2}]^{\top}.
\end{equation*}
And $u_{p}$, $u_{s}$ satisfy $\Delta u_{p}+k_{p}^{2}u_{p}=0$ and $\Delta u_{s}+k_{s}^{2}u_{s}= 0$. In addition, the Kupradze radiation condition is required to the scattered field $u_{c}^{{\rm sc}}$, i.e.
\begin{equation} \label{radiation}
\mathop{\lim}\limits_{r\to\infty}\sqrt{r}\Big(\frac{\partial u_{p}}{\partial r}-ik_{p_{0}}u_{p}\Big)=0,~\quad~
\mathop{\lim}\limits_{r\to\infty}\sqrt{r}\Big(\frac{\partial u_{s}}{\partial r}-ik_{s_{0}}u_{s}\Big)=0, \quad r=|x|.
\end{equation}
The radiation condition \eqref{radiation} is assumed to hold in all directions $\hat{x}=x/|x|\in \mathbb{S}:=\{x\in \mathbb{R}^{2},\, |x|=1\}$ and $k_{t_{0}}:= k_{t}(\lambda_{0},\mu_{0},\rho_{0})$ ($t=p,s$). Throughout this paper, the solution to the Navier equation \eqref{a} satisfying the Kupradze radiation condition \eqref{radiation} is called the radiating solution. It is well known that the radiating solution to the Navier equation has the following asymptotic expansions:
\begin{equation*} 
u^{{\rm sc}}_{c}(x)=\frac{e^{ik_{p_{0}}r}}{\sqrt{r}}u_{p}^{\infty}(\hat{x})\hat{x}
+\frac{e^{ik_{s_{0}}r}}{\sqrt{r}}u_{s}^{\infty}(\hat{x})\hat{x}^{\bot}
+\mathcal{O}\Big(\frac{1}{r^{3/2}}\Big),\quad {\rm as} ~r\rightarrow\infty,
\end{equation*}
and
\begin{equation*} 
T_{\lambda,\mu}u^{{\rm sc}}_{c}(x)=\frac{i\omega^{2}}{k_{p_{0}}} \frac{e^{ik_{p_{0}}r}}{\sqrt{r}}{u}_{p}^{\infty}(\hat{x}) \hat{x} +\frac{i\omega^2}{k_{s_{0}}}\frac{e^{ik_{s_{0}}r}}{\sqrt{r}}{u}_{s}^{\infty}(\hat{x}) \hat{x}^{\bot}+\mathcal{O}\Big(\frac{1}{r}\Big),\quad {\rm as} ~r\rightarrow\infty.
\end{equation*}
The stress vector $T_{\lambda,\mu}u$ is defined by
\begin{equation*}
T_{\lambda,\mu}u:=2\mu\frac{\partial u}{\partial\nu}+\lambda\nu~\nabla\cdot u-\mu \nu^{\bot} \nabla^{\bot}\cdot u,
\end{equation*}
where $\nu$ denotes the unit exterior normal vector and $\nu^{\bot}$ is obtained by rotating $\nu$ anticlockwise by $\pi/2$. The functions $u_{p}^{\infty}$, $u_{s}^{\infty}$ are known as compressional and shear far-field patterns of $u^{{\rm sc}}_{c}$, respectively. We will denote the pair of far-field patterns $(u_{p}^{\infty}(\hat{x}),u_{s}^{\infty}(\hat{x}))$ of the corresponding scattered field by $u^{\infty}_{c}(\hat{x})$, i.e.
\begin{equation*}
u^{\infty}_{c}(\hat{x})=(u_{p}^{\infty}(\hat{x}),u_{s}^{\infty}(\hat{x})).
\end{equation*}

Next we introduce the elastic Herglotz wave function with density $g=(g_{p},g_{s})\in [L^{2}(\mathbb{S})]^{2}$ defined by
\begin{equation} \label{Vg(x)}
v_{g}(x)=e^{-i\pi/4}\int_{\mathbb{S}}\left\{\sqrt{\frac{k_{p_{0}}}{\omega}}e^{ik_{p_{0}}d\cdot x}d g_{p}(d)+\sqrt{\frac{k_{s_{0}}}{\omega}}e^{ik_{s_{0}}d\cdot x}d^{\bot}g_{s}(d)\right\}{\rm d}s(d).
\end{equation}
The Hilbert space $[L^{2}(\mathbb{S})]^{2}$ in this paper is equipped with the inner product:
\begin{equation*}
\langle g,h\rangle
=\frac{\omega}{k_{p}}\int_{\mathbb{S}}g_{p}\overline{h_{p}}{\rm d}s+
\frac{\omega}{k_{s}}\int_{\mathbb{S}}g_{s}\overline{h_{s}}{\rm d}s,\quad~g,h\in [L^{2}(\mathbb{S})]^{2}.
\end{equation*}

For the special case of a plane wave incident field $u^{{\rm in}}(x,d)=de^{ik_{p_{0}}x\cdot d} +d^{\bot}e^{ik_{s_{0}}x\cdot d}$, we explicitly indicate the dependence on the incident direction $d\in \mathbb{S}$ by a second argument and accordingly we write $u^{\rm sc}_{c}(x,d)$, $u_{c}(x,d)$ and $u^{\infty}_{c}(\hat{x},d)$ for the corresponding scattered field, total field, and far field pattern of the problem \eqref{a}-\eqref{radiation}, respectively. Define the elastic far-field operator $F_{c}g:[L^{2}(\mathbb{S})]^{2} \longrightarrow [L^{2}(\mathbb{S})]^{2}$ $(c:=(\lambda,\mu,\rho))$ by
\begin{equation} \label{F}
(F_{c}g)(\hat{x})=e^{-i\pi/4}\int_{\mathbb{S}} \left\{\sqrt{\frac{k_{p_{0}}}{\omega}}u^{\infty}_{c}(\hat{x},d)dg_{p}(d) +\sqrt{\frac{k_{s_{0}}}{\omega}}u^{\infty}_{c}(\hat{x},d)d^{\bot}g_{s}(d)\right\}{\rm d}s(d),
\end{equation}

By linearity, for any given function $g\in [L^{2}(\mathbb{S})]^{2}$, the solution to the direct scattering problem \eqref{a}-\eqref{radiation} with incident field of the elastic Herglotz wave function $v_{g}$ defined by \eqref{Vg(x)} is
\begin{equation} \label{ug}
u_{c,g}(x)=\int_{\mathbb{S}}u_{c}(x,d)g(d){\rm d}s(d),\quad x\in\mathbb{R}^{2},
\end{equation}
and the corresponding scattered field
\begin{equation*}
u_{c,g}^{{\rm sc}}(x)=\int_{\mathbb{S}}u^{{\rm sc}}_{c}(x,d)g(d){\rm d}s(d), \quad x\in\mathbb{R}^{2},
\end{equation*}
has the far field pattern $u_{c,g}^{\infty}$ satisfying $u_{c,g}^{\infty}=F_{c}g$.

Finally, we introduce the fundamental solution of the Navier equation \eqref{a} in $\mathbb{R}^2$ space which is given by
\begin{equation*}
\Gamma_{c}(x,y)=\frac{i}{4\mu}H^{(1)}_{0}(k_{s}|x-y|)I
+\frac{i}{4\omega^{2}} \nabla^{\top}_{x}\nabla_{x} \big(H^{(1)}_{0}(k_{s}|x-y|) -H^{(1)}_{0}(k_{p}|x-y|)\big)
\end{equation*}
for $x,y\in \mathbb{R}^{2}$ and $x\neq y$, where $H^{(1)}_{0}(\cdot)$ is the Hankel function of the first kind of order zero and $I$ is the identity matrix. In addition, the subscript $x$ is used to denote differentiation with respect to the corresponding variable. The far field patterns $\Gamma_{p}^{\infty}$ and $\Gamma_{s}^{\infty}$ are given by
\begin{equation*}
\Gamma_{p}^{\infty}(\widehat{x},y)=\frac{1}{\lambda+2\mu} \frac{e^{\frac{i\pi}{4}}}{\sqrt{8\pi k_{p}}}e^{-ik_{p}\widehat{x}\cdot y}J(\widehat{x}), \quad \quad
\Gamma_{s}^{\infty}(\widehat{x},y)=\frac{1}{\mu} \frac{e^{\frac{i\pi}{4}}}{\sqrt{8\pi k_{s}}}e^{-ik_{s}\widehat{x}\cdot y}J(\widehat{x}^{\bot}),
\end{equation*}
where $J(z)=\frac{zz^{\top}}{|z|^{2}}$ for any $z\in \mathbb{R}^{2}$, $z\neq 0$.

\section{A monotonicity relation for the far field operator} \label{sec3}

In this section we derive some monotonicity relations between the parameters $(\lambda,\mu, \rho)$ and the far field operator $F$ that are of fundamental importance in justifying monotonicity based shape reconstruction, and will be needed in the later sections.

\begin{lemma}
Let $\lambda,\mu,\rho\in L^{\infty}_{+}(\mathbb{R}^2)$, and let $B_R(O)$ be a ball containing $\Omega$. Then
\begin{equation} \label{eq0}
\langle g,F_{c}g\rangle=\frac{1}{\sqrt{8\pi\omega}}\int_{\partial B_{R}(O)}\big(T_{0}v_{g}\,\, \overline{u^{\rm sc,+}_{c,g}} -T_{0}\overline{u^{\rm sc,+}_{c,g}}\,\, v_{g}\big){\rm d}s.
\end{equation}
If $\lambda_{j}$, $\mu_{j}$, $\rho_{j}\in L_{+}^{\infty}(\mathbb{R}^2)$, then for any $j,l\in\{1,2\}$ we have
\begin{equation} \label{F0}
\int_{\partial B_R(O)}(u_{c_j, g}^{\rm sc,+}\,\, T_{0}\overline{u_{c_l,g}^{\rm sc,+}}-\overline{u_{c_l,g}^{\rm sc,+}}\,\, T_{0}u_{c_j,g}^{\rm sc,+}){\rm d}s=-2i\omega\langle F_{c_{j}}g,F_{c_{l}}g\rangle.
\end{equation}
\end{lemma}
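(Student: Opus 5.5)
Both identities will come from Betti's formula, $\int_{\partial B}(T_0u\cdot \overline v - T_0\overline v\cdot u)\,{\rm d}s=\int_B(\Delta^*_0u\cdot\overline v-\Delta^*_0\overline v\cdot u)\,{\rm d}x$, applied in the background medium outside $\Omega$, combined with the far field asymptotics of Section 2. Here $T_0=T_{\lambda_0,\mu_0}$ and $\Delta^*_0$ is the background Lamé operator, and the superscript $+$ denotes traces taken from outside $B_R(O)$, where all fields solve the homogeneous Navier equation. The integrands in \eqref{eq0} and \eqref{F0} involve only pairs of background solutions in $\mathbb{R}^2\setminus\overline{\Omega}$, so Betti's formula on the annulus $B_r\setminus\overline{B_R}$ shows that both boundary integrals are independent of $R$. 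We may therefore let $R\to\infty$ and evaluate them through the asymptotic expansions.

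\textbf{Proof of \eqref{F0}.} Insert the expansions of $u^{\rm sc}_{c_j,g}$ and $T_0u^{\rm sc}_{c_j,g}$ into the integrand. The $\mathcal{O}(r^{-3/2})$ and $\mathcal{O}(r^{-1})$ remainders contribute $o(1)$ after integrating over a circle of length $2\pi r$. The leading terms produce products of order $1/r$, which cancel the length factor $r$. Since $\hat x\cdot\hat x^\bot=0$, the p--s cross terms vanish pointwise. The pure p-term is
\[
\Big(-\tfrac{i\omega^2}{k_{p_0}}-\tfrac{i\omega^2}{k_{p_0}}\Big)u^\infty_{p,j}\overline{u^\infty_{p,l}},
\]
and the s-term is analogous with $k_{s_0}$. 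Hence the integral tends to
\[
-2i\omega^2\Big(\tfrac{1}{k_{p_0}}\int_{\mathbb S} u_{p,j}^\infty\overline{u_{p,l}^\infty}+\tfrac{1}{k_{s_0}}\int_{\mathbb S} u_{s,j}^\infty\overline{u_{s,l}^\infty}\Big)=-2i\omega\langle F_{c_j}g,F_{c_l}g\rangle,
\]
using the weighted inner product and $u^\infty_{c,g}=F_cg$. This is the standard elastic analogue of the Rellich/Green computation.

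\textbf{Proof of \eqref{eq0}.} Here one factor is the Herglotz wave $v_g$, which does not decay, so we cannot simply send $R\to\infty$ with both expansions. Instead I plan to write $v_g$ as a superposition of plane waves and move the integral over $d$ outside. For each direction $d$, consider the pairing of a plane wave $d\,e^{ik_{p_0}x\cdot d}$ (or $d^\bot e^{ik_{s_0}x\cdot d}$) with $\overline{u^{\rm sc}}$. This pairing is the elastic version of the identity giving the far field of a radiating solution in terms of Cauchy data, i.e. a representation formula with the far field of $\Gamma$. Concretely, the far field representation
\[
u_p^\infty(\hat x)\hat x=\int_{\partial B_R}\big(T_0\Gamma_p^\infty(\hat x,y)\,u-\Gamma_p^\infty(\hat x,y)\,T_0u\big)\,{\rm d}s(y),
\]
together with the explicit form of $\Gamma_p^\infty,\Gamma_s^\infty$, which contain $e^{-ik\hat x\cdot y}J(\cdot)$ and the constant $e^{i\pi/4}/\sqrt{8\pi k}$, shows that $\int_{\partial B_R}(T_0\overline{w_d}\,u-\overline{w_d}\,T_0u)$ equals a constant multiple of $\overline{u^\infty_p(d)}$, and similarly for the s-part. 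Taking the complex conjugate appropriately and integrating against $g_p(d)$ and $g_s(d)$ with the weights $\sqrt{k/\omega}e^{-i\pi/4}$ yields the weighted pairing $\langle g,F_cg\rangle$.

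The constants $1/(\lambda_0+2\mu_0)$ and $1/\mu_0$ in $\Gamma^\infty$ combine with $k_{p_0}^2=\rho_0\omega^2/(\lambda_0+2\mu_0)$ and the Herglotz weights to give exactly $1/\sqrt{8\pi\omega}$ together with the inner product weights $\omega/k$. The main obstacle is this constant bookkeeping, together with the correct sign and conjugation conventions: which of $F_cg$ and $g$ is conjugated, and the factor $e^{\pm i\pi/4}$. I would first verify it on the scalar-like p-channel alone, then the s-channel follows by replacing $\hat x$ with $\hat x^\bot$ and $\lambda_0+2\mu_0$ with $\mu_0$.
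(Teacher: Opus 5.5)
Your proposal is correct and follows the paper's proof essentially step for step: \eqref{F0} via Betti's formula on the annulus $B_r(O)\setminus\overline{B_R(O)}$ followed by the far-field expansions of $u^{\rm sc}$ and $T_0u^{\rm sc}$ as $r\to\infty$, and \eqref{eq0} via the representation of $u_p^\infty$, $u_s^\infty$ as integrals over $\partial B_R(O)$ against $\Gamma_p^\infty$, $\Gamma_s^\infty$, then integrating against $g_p$, $g_s$ to reassemble $v_g$. In the bookkeeping you deferred, combining $1/(\lambda_0+2\mu_0)$ and $1/\mu_0$ with $k_{p_0}^2=\rho_0\omega^2/(\lambda_0+2\mu_0)$ and $k_{s_0}^2=\rho_0\omega^2/\mu_0$ actually yields $1/(\rho_0\sqrt{8\pi\omega})$, so the stated constant holds only under the normalization $\rho_0=1$; the same applies to the $-2i\omega$ you get from the expansion $T_0u^{\rm sc}\sim i\omega^2k^{-1}u^{\rm sc}$, whose true leading coefficient is $i\rho_0\omega^2/k$, and the paper adopts this normalization tacitly when it replaces $1/(\lambda_0+2\mu_0)$ by $k_{p_0}^2/\omega^2$.
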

\begin{proof}
The general form of $E_{\lambda,\mu}(u,v)$ is given by
\begin{align*}
E_{\lambda,\mu}(u,v)=&(2\mu+\lambda)\Big(\frac{\partial u_1}{\partial x_1}\frac{\partial v_1} {\partial x_1}+\frac{\partial u_2}{\partial x_2}\frac{\partial v_2}{\partial x_2}\Big)+\mu\Big(\frac{\partial u_1}{\partial x_2}\frac{\partial v_1} {\partial x_2}+\frac{\partial u_2}{\partial x_1}\frac{\partial v_2}{\partial x_1}\Big)  \\
&+\lambda\Big(\frac{\partial u_1}{\partial x_1}\frac{\partial v_2} {\partial x_2}+\frac{\partial u_2}{\partial x_2}\frac{\partial v_1}{\partial x_1}\Big)
+\mu\Big(\frac{\partial u_1}{\partial x_2}\frac{\partial v_2} {\partial x_1}+\frac{\partial u_2}{\partial x_1}\frac{\partial v_1}{\partial x_2}\Big).
\end{align*}
By Betti Representation Theorem, we have
\begin{equation*}
u^{\rm sc,+}_{c,g}(x)=\int_{\partial B_{R}(O)}\Big[\big[T_{0}\Gamma_{0}(x,y)\big]^{\top} u^{\rm sc,+}_{c,g}(y)-\Gamma_{0}(x,y)T_{0}u^{\rm sc,+}_{c,g}(y)\Big]{\rm d}s(y), \quad x\in \mathbb{R}^{2}\backslash\overline{\Omega},
\end{equation*}
with
\begin{equation*}
T_{j}u:=T_{\lambda_{j},\mu_{j}}u=2\mu_{j}\frac{\partial u}{\partial\nu} +\lambda_{j}\nu~\nabla\cdot u-\mu_{j} \nu^{\bot} \nabla^{\bot}\cdot u, \quad \Delta^{*}_{j}:=\Delta^{*}_{\lambda_{j},\mu_{j}} ,\quad E_{j}:=E_{\lambda_{j},\mu_{j}},
\end{equation*}
\begin{equation*}
\Gamma_{j}(x,y):=\Gamma_{c_{j}}(x,y)=\frac{i}{4\mu_{j}}H^{(1)}_{0}(k_{s_{j}}|x-y|)I
+\frac{i}{4\omega^{2}} \nabla^{\top}_{x}\nabla_{x} \big(H^{(1)}_{0}(k_{s_{j}}|x-y|) -H^{(1)}_{0}(k_{p_{j}}|x-y|)\big),
\end{equation*}
\begin{equation*}
c_{j}:=(\lambda_{j},\mu_{j},\rho_{j}),\quad k_{p_{j}}:=k_{p}(\lambda_{j},\mu_{j},\rho_{j}):=\omega\sqrt{\frac{\rho_{j}}{2\mu_{j} +\lambda_{j}}},\quad k_{s_{j}}:=k_{s}(\lambda_{j},\mu_{j},\rho_{j}):=\omega\sqrt{\frac{\rho_{j}}{\mu_{j}}}.
\end{equation*}
From the asymptotic behavior of the Hankel function $H^{(1)}_{0}(\cdot)$ and the far field patterns $\Gamma_{p}^{\infty}$, $\Gamma_{s}^{\infty}$, it follows that the far field patterns of $u^{\rm sc,+}_{c,g}(x)$ are given by
\begin{equation*}
u_{p}^{\infty}(d)d=\frac{e^{i\pi/4}}{\sqrt{8\pi k_{p_{0}}}}\frac{k_{p_{0}}^{2}}{\omega^{2}} \int_{\partial B_{R}(O)}\Big\{\big[J(d)T_{0}e^{-ik_{p_{0}}d\cdot y}\big]^{\top}u^{\rm sc,+}_{c,g}(y)-J(d)e^{-ik_{p_{0}}d\cdot y}T_{0}u^{\rm sc,+}_{c,g}(y)\Big\}{\rm d}s,
\end{equation*}
\begin{equation*}
u_{s}^{\infty}(d)d^{\bot}=\frac{e^{i\pi/4}}{\sqrt{8\pi k_{s_{0}}}} \frac{k_{s_{0}}^{2}}{\omega^{2}} \int_{\partial B_{R}(O)}\Big\{\big[J(d)^{\bot}T_{0} e^{-ik_{s_{0}}d \cdot y}\big]^{\top}u^{\rm sc,+}_{c,g}(y)-J(d)^{\bot}e^{-ik_{s_{0}}d\cdot y}T_{0}u^{\rm sc,+}_{c,g}(y)\Big\}{\rm d}s
\end{equation*}
where $J(d)^{\bot}=I-J(d)$. Thus,
\begin{align*}
\langle g,F_{c}g\rangle =\frac{\omega}{k_{p_{0}}}\int_{\mathbb{S}}g_{p}\overline{u^{\infty}_{p}}{\rm d}s(d)+\frac{\omega}{k_{s_{0}}}\int_{\mathbb{S}}g_{s}\overline{u^{\infty}_{s}}{\rm d}s
=\frac{1}{\sqrt{8\pi\omega}}\int_{\partial B_{R}(O)}\big(T_{0}v_{g}\,\, \overline{u^{\rm sc,+}_{c,g}} -T_{0}\overline{u^{\rm sc,+}_{c,g}}\,\, v_{g}\big){\rm d}s.
\end{align*}

Let $r>R$, then $u_{c_j,g}^{\rm sc}\in [H_{{\rm loc}}^1(\mathbb{R}^2)]^{2}$ solve (for $c_j:=(\lambda_{j},\mu_{j},\rho_{j})$)
\begin{equation*}
\mu_{0}\Delta u_{c,g}^{\rm sc}+(\mu_{0}+\lambda_{0})\nabla(\nabla\cdot) u_{c,g}^{\rm sc}+\rho_{0}\omega^2 u_{c,g}^{\rm sc}=0 \quad {\rm in}\,B_r(O) \backslash \overline{B_R(O)},
\end{equation*}
and applying Betti's formula we obtain that
\begin{equation} \label{eq1}
\int_{\partial B_r(O)}(u_{c_{j}, g}^{\rm sc,+}\,\, T_{0}\overline{u_{c_{l},g}^{\rm sc,+}}-\overline{u_{c_{l},g}^{\rm sc,+}}\, T_{0}u_{c_{j},g}^{\rm sc,+}){\rm d}s=\int_{\partial B_R(O)}(u_{c_{j},g}^{\rm sc,+}\, T_{0}\overline{u_{c_{l},g}^{\rm sc,+}} -\overline{u_{c_{l},g}^{\rm sc,+}} \, \, T_{0}u_{c_{j},g}^{\rm sc,+}){\rm d}s
\end{equation}
Using the radiation condition \eqref{radiation} and the far field expansion we find that, as $r\rightarrow\infty$,
\begin{equation} \label{eq2}
\int_{\partial B_r(O)}(u_{c_j, g}^{\rm sc,+}\,\, T_{0}\overline{u_{c_l,g}^{\rm sc,+}} -\overline{u_{c_l,g}^{\rm sc,+}}\,\, T_{0}u_{c_j,g}^{\rm sc,+}){\rm d}s=-2i\omega\langle F_{c_{j}}g,F_{c_{l}}g\rangle.
\end{equation}
Substituting \eqref{eq2} into \eqref{eq1} and letting $r\rightarrow \infty$ finally gives \eqref{F0}.
\end{proof}

The next tool we will use to prove the monotonicity relation for the far field operator is the following integral identity.

\begin{lemma}
If $\lambda_{j}$, $\mu_{j}$, $\rho_{j}\in L_{+}^{\infty}(\mathbb{R}^2)$ and $B_R(O)$ is a ball containing $\Omega$. Then, for any $g\in [L^2(\mathbb{S})]^{2}$, it holds that
\begin{align} \label{maineq0}
&\sqrt{8\pi\omega}\left(\langle F_{c_{1}}g,g\rangle-\langle g,F_{c_{2}}g\rangle\right) -2i\omega\langle F_{c_{1}}g,F_{c_{2}}g\rangle \\
=&\int_{\partial B_R(O)}(\overline{u}_{c_2,g}-\overline{u}_{c_1,g})(T_{2}u_{c_2,g} -T_{1}u_{c_1,g}) {\rm d}s \nonumber\\ &+\int_{B_R(O)}\rho_{2}\omega^{2}|u_{c_2,g}-u_{c_1,g}|^{2}
-E_{2}(u_{c_1,g}-u_{c_2,g},\overline{u}_{c_1,g}-\overline{u}_{c_2,g}){\rm d}y \nonumber\\ &+\int_{B_R(O)}E_{2}(\overline{u}_{c_1,g},u_{c_1,g}) -E_{1}(\overline{u}_{c_1,g},u_{c_1,g}) +(\rho_{1}-\rho_{2})\omega^{2}|u_{c_1,g}|^{2}{\rm d}y. \nonumber
\end{align}
\end{lemma}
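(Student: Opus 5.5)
We prove the identity by expanding the boundary integral on $\partial B_R(O)$ with the splitting $u_{c_j,g}=v_g+u^{\rm sc,+}_{c_j,g}$ outside $\Omega$. The pure scattered-field terms are then handled by \eqref{F0} and the mixed incident/scattered terms by \eqref{eq0}. The volume terms come from Betti's first formula (Green's identity for the Lam\'e operator) applied on $B_R(O)$.

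\emph{Step 1 (volume side).} Write $w:=u_{c_2,g}-u_{c_1,g}$. Since $\Delta^*_j u_{c_j,g}+\rho_j\omega^2u_{c_j,g}=0$ weakly in $B_R(O)$, testing against $\overline{u}$ gives
\begin{equation*}
\int_{\partial B_R(O)}\overline{v}\,T_ju_{c_j,g}\,{\rm d}s=\int_{B_R(O)}E_j(u_{c_j,g},\overline{v})-\rho_j\omega^2u_{c_j,g}\overline{v}\,{\rm d}y
\end{equation*}
for any $v\in H^1(B_R(O))^2$, using the symmetry $E_j(u,v)=E_j(v,u)$. We apply this with $j=2$, $v=w$ and with $j=1$, $v=w$ and subtract. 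Then we add and subtract $E_2(u_{c_1,g},\overline{w})$ and $\rho_2\omega^2u_{c_1,g}\overline w$. This yields
\begin{equation*}
\int_{\partial B_R}\overline{w}\,(T_2u_{c_2,g}-T_1u_{c_1,g})\,{\rm d}s=\int_{B_R}E_2(w,\overline w)-\rho_2\omega^2|w|^2\,{\rm d}y+\int_{B_R}(E_2-E_1)(u_{c_1,g},\overline w)-(\rho_2-\rho_1)\omega^2u_{c_1,g}\overline w\,{\rm d}y.
\end{equation*}
The last integral still contains $\overline w$, and we need it in terms of $|u_{c_1,g}|^2$ only. To get this, we apply the weak formulation for $u_{c_1,g}$ once more with $j=1$, and once with coefficients $2$ acting on $u_{c_1,g}$, against $\overline{u}_{c_1,g}$ and $\overline{u}_{c_2,g}$. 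The cross terms $(E_2-E_1)(u_{c_1,g},\overline{u}_{c_2,g})$ are then converted into boundary terms. Only the diagonal terms $E_2(\overline u_{c_1,g},u_{c_1,g})-E_1(\overline u_{c_1,g},u_{c_1,g})+(\rho_1-\rho_2)\omega^2|u_{c_1,g}|^2$ survive in the volume, which is the form stated in \eqref{maineq0}. This bookkeeping moves extra terms of the form $\int_{\partial B_R}\overline{u}_{c_j,g}T_lu_{c_k,g}$ onto the left-hand side.

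\emph{Step 2 (boundary side).} On $\partial B_R(O)$ all coefficients equal the background values, because $\Omega\subset B_R(O)$ and the coefficients differ from the background only inside $\Omega$. Hence $T_1=T_2=T_0$ there. We substitute $u_{c_j,g}=v_g+u^{\rm sc,+}_{c_j,g}$ into every remaining boundary term, and the resulting pieces are of three kinds:
\begin{itemize}
\item The $v_g$–$v_g$ pieces cancel by Betti's formula, since $v_g$ is an entire solution of the background equation, so that $\int_{\partial B_R}(\overline{v}_gT_0v_g-v_gT_0\overline v_g)\,{\rm d}s=0$.
\item The mixed pieces $\int(T_0v_g\,\overline{u^{\rm sc,+}_{c_2,g}}-T_0\overline{u^{\rm sc,+}_{c_2,g}}\,v_g)$ and their complex-conjugate analogue for $c_1$ are identified through \eqref{eq0} with $\sqrt{8\pi\omega}\,\langle g,F_{c_2}g\rangle$ and $\sqrt{8\pi\omega}\,\overline{\langle g,F_{c_1}g\rangle}=\sqrt{8\pi\omega}\,\langle F_{c_1}g,g\rangle$.
\item The scattered–scattered piece with $j=1$, $l=2$ in \eqref{F0} gives $-2i\omega\langle F_{c_1}g,F_{c_2}g\rangle$. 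Any diagonal scattered pieces with $j=l$ that occur are likewise evaluated by \eqref{F0}.
\end{itemize}

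\emph{Main obstacle.} The key difficulty is sign and conjugation bookkeeping, and I expect this to be where the proof is hardest. The boundary integral must be organized so that exactly the antisymmetric combinations $aT_0\overline b-\overline bT_0a$ appear, since only these are evaluated by \eqref{eq0} and \eqref{F0}. The symmetric parts must cancel against the extra terms produced in Step 1. I would first write all boundary contributions as sums of $\int\overline{u}_{c_k,g}T_0u_{c_j,g}$ for $j,k\in\{1,2\}$. I would then split each into scattered and incident parts and check term by term that the leftover non-antisymmetric pieces cancel pairwise. Where needed I would use $E_2(\overline u,u)$ real-valuedness together with the identity $\int_{\partial B_R}\overline{u}_{c_1,g}T_1u_{c_1,g}=\int_{B_R}E_1(u_{c_1,g},\overline u_{c_1,g})-\rho_1\omega^2|u_{c_1,g}|^2$, which has real right-hand side, to absorb them.
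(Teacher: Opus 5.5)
Your proposal is correct and uses the same ingredients as the paper: Betti's formula on $B_R(O)$, the splitting $u_{c_j,g}=v_g+u^{\rm sc,+}_{c_j,g}$ on $\partial B_R(O)$, \eqref{eq0} for the mixed terms and \eqref{F0} with $j=1$, $l=2$; the paper runs it in the other direction, first reducing the left-hand side to $\int_{\partial B_R(O)}(u_{c_1,g}T_2\overline{u}_{c_2,g}-\overline{u}_{c_2,g}T_1u_{c_1,g})\,{\rm d}s$ and then expanding with Betti's formula, while you start from the volume side via $w$. Carrying out your Step 1 shows that this single antisymmetric integral is the only boundary term to be matched with the left-hand side, so no symmetric leftovers occur and your ``main obstacle'' is immediate; note that the cross term $E_2(u_{c_1,g},\overline{u}_{c_2,g})$ must be converted using the equation satisfied by $\overline{u}_{c_2,g}$, not by letting ``coefficients 2 act on $u_{c_1,g}$'', since $u_{c_1,g}$ does not solve the equation with coefficients $(\lambda_2,\mu_2,\rho_2)$.
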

\begin{proof}
The identity \eqref{eq0} and \eqref{F0} (with $j=1$ and $l=2$ ) immediately imply that
\begin{align*}
&\sqrt{8\pi\omega}\left(\langle F_{c_{1}}g,g\rangle-\langle g,F_{c_{2}}g\rangle\right) -2i\omega\langle F_{c_{1}}g,F_{c_{2}}g\rangle \\
=&\int_{\partial B_{R}(O)}\big(T_{0}\overline{v_{g}}\,u^{\rm sc,+}_{c_{1},g} -T_{0}u^{\rm sc,+}_{c_{1},g}\,\overline{v_{g}}\big){\rm d}s-\int_{\partial B_{R}(O)}\big(T_{0}v_{g}\, \overline{u^{\rm sc,+}_{c_{2},g}}-T_{0}\overline{u^{\rm sc,+}_{c_{2},g}}\,v_{g}\big){\rm d}s\\
&+\int_{\partial B_R(O)}(u_{c_1,g}^{\rm sc,+}\,\, T_{0}\overline{u_{c_2,g}^{\rm sc,+}}-\overline{u_{c_2,g}^{\rm sc,+}}\,\, T_{0}u_{c_1,g}^{\rm sc,+}){\rm d}s \\
=&\int_{\partial B_R(O)}(u_{c_1,g}^{\rm sc,+}\,\, T_{2}\overline{u_{c_2,g}^{-}} -\overline{u_{c_2,g}^{-}}\,\, T_{0}u_{c_1,g}^{\rm sc,+}){\rm d}s-\int_{\partial B_{R}(O)}\big(T_{0}v_{g}\, \overline{u^{\rm sc,+}_{c_{2},g}}-T_{0}\overline{u^{\rm sc,+}_{c_{2},g}}\, v_{g}\big){\rm d}s \\
=&\int_{\partial B_R(O)}(u_{c_1,g}^{-}\,\, T_{2}\overline{u_{c_2,g}^{-}} -\overline{u_{c_2,g}^{-}}\,\, T_{1}u_{c_1,g}^{-}){\rm d}s-\int_{\partial B_R(O)}(v_{g}\,\, T_{2}\overline{u_{c_2,g}^{-}} -\overline{u_{c_2,g}^{-}}\,\, T_{0}v_{g}){\rm d}s \\
&-\int_{\partial B_{R}(O)}\big(T_{0}v_{g}\, \overline{u^{\rm -}_{c_{2},g}} -T_{2}\overline{u^{-}_{c_{2},g}}\, v_{g}\big){\rm d}s+\int_{\partial B_{R}(O)}\big(T_{0}v_{g}\, \overline{v_{g}}-T_{0}\overline{v_{g}}\, v_{g}\big){\rm d}s \\
=&\int_{\partial B_R(O)}(u_{c_1,g}^{-}\,\, T_{2}\overline{u_{c_2,g}^{-}} -\overline{u_{c_2,g}^{-}}\,\, T_{1}u_{c_1,g}^{-}){\rm d}s,
\end{align*}
where we have used the transmission boundary conditions
\begin{equation*}
u_{c_j,g}^{\rm sc,+}+v_{g}=u_{c_j,g}^{-}, \quad T_{0}u^{\rm sc,+}_{c_{j},g}+T_{0}v_{g}= T_{j}u^{-}_{c_{j},g}  \quad {\rm on} \quad \partial B_R(O).
\end{equation*}
For notational simplicity, we omit the superscript, that is,
\begin{align*}
&\int_{\partial B_R(O)}(u_{c_1,g}^{-}\,\, T_{2}\overline{u_{c_2,g}^{-}} -\overline{u_{c_2,g}^{-}}\,\, T_{1}u_{c_1,g}^{-}){\rm d}s:=\int_{\partial B_R(O)}(u_{c_1,g} \,\, T_{2}\overline{u}_{c_2,g} -\overline{u}_{c_2,g} T_{1}u_{c_1,g}){\rm d}s \\
=&\int_{\partial B_R(O)}(\overline{u}_{c_2,g}-\overline{u}_{c_1,g})(T_{2}u_{c_2,g} -T_{1}u_{c_1,g}){\rm d}s \\ &+\int_{\partial B_R(O)}(u_{c_1,g}T_{2}\overline{u}_{c_2,g} -\overline{u}_{c_2,g}T_{2}u_{c_2,g} +\overline{u}_{c_1,g}T_{2}u_{c_2,g} -\overline{u}_{c_1,g}\,T_{1}u_{c_1,g}){\rm d}s.
\end{align*}
Applying Betti's formula yields
\begin{align*}
&\int_{\partial B_R(O)}(u_{c_1,g}T_{2}\overline{u}_{c_2,g} -\overline{u}_{c_2,g}T_{2}u_{c_2,g} +\overline{u}_{c_1,g}T_{2}u_{c_2,g} -\overline{u}_{c_1,g}\,T_{1}u_{c_1,g}){\rm d}s \\
=&\int_{B_R(O)}E_{2}(u_{c_1,g}-u_{c_2,g},\overline{u}_{c_2,g}) +E_{2}(\overline{u}_{c_1,g},u_{c_2,g}-u_{c_1,g})+E_{2}(\overline{u}_{c_1,g}, u_{c_1,g}) -E_{1}(\overline{u}_{c_1,g},u_{c_1,g}){\rm d}y \\
&+\omega^{2}\int_{B_R(O)}\rho_{2}\,u_{c_2,g}\overline{u}_{c_2,g} -\rho_{2}\, u_{c_1,g}\overline{u}_{c_2,g} -\rho_{2}\,\overline{u}_{c_1,g}u_{c_2,g} +\rho_{1}\,\overline{u}_{c_1,g}u_{c_1,g}{\rm d}y \\
=&\int_{B_R(O)}\rho_{2}\omega^{2}|u_{c_2,g}-u_{c_1,g}|^{2} -E_{2}(u_{c_1,g}-u_{c_2,g},\overline{u}_{c_1,g}-\overline{u}_{c_2,g}){\rm d}y \\
&+\int_{B_R(O)}E_{2}(\overline{u}_{c_1,g},u_{c_1,g}) -E_{1}(\overline{u}_{c_1,g},u_{c_1,g}) +(\rho_{1}-\rho_{2})\omega^{2}|u_{c_1,g}|^{2}{\rm d}y,
\end{align*}
Consequently, we obtain that
\begin{align*}
&\sqrt{8\pi\omega}\left(\langle F_{c_{1}}g,g\rangle-\langle g,F_{c_{2}}g\rangle\right) -2i\omega\langle F_{c_{1}}g,F_{c_{2}}g\rangle \\
=&\int_{\partial B_R(O)}(\overline{u}_{c_2,g}-\overline{u}_{c_1,g})(T_{2}u_{c_2,g} -T_{1}u_{c_1,g}) {\rm d}s \\
&+\int_{B_R(O)}\rho_{2}\omega^{2}|u_{c_2,g}-u_{c_1,g}|^{2}
-E_{2}(u_{c_1,g}-u_{c_2,g},\overline{u}_{c_1,g}-\overline{u}_{c_2,g}){\rm d}y \\ &+\int_{B_R(O)}E_{2}(\overline{u}_{c_1,g},u_{c_1,g}) -E_{1}(\overline{u}_{c_1,g},u_{c_1,g}) +(\rho_{1}-\rho_{2})\omega^{2}|u_{c_1,g}|^{2}{\rm d}y .
\end{align*}
\end{proof}

\begin{lemma}[Theorem 2 in \cite{S2005}]
The scattering matrix given by $S_{c}=I+i\sqrt{\frac{\omega}{2\pi}}F_{c}$ is a unitary operator, i.e. $S_{c}^{*}S_{c}=S_{c}S^{*}_{c}=I$.
\end{lemma}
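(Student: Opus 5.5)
The plan is to derive unitarity of $S_c$ from the two identities in the first lemma of this section, applied with equal parameters, and then to upgrade the resulting isometry to a unitary operator using reciprocity.

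\textbf{Step 1: the ``optical theorem'' identity.} Take $c_1=c_2=c$ in \eqref{F0}. The left-hand side is the boundary term $\int_{\partial B_R(O)}(u^{\rm sc,+}_{c,g}T_0\overline{u^{\rm sc,+}_{c,g}}-\overline{u^{\rm sc,+}_{c,g}}T_0u^{\rm sc,+}_{c,g})\,{\rm d}s$, and \eqref{F0} equates it to $-2i\omega\|F_cg\|^2$. Next apply Betti's formula in $B_R(O)$ to the total field $u_{c,g}=v_g+u^{\rm sc}_{c,g}$. Since $\lambda,\mu,\rho$ are real, the volume terms $E(u,\overline u)-\rho\omega^2|u|^2$ are real and cancel in the antisymmetric combination. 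This gives
\[
\int_{\partial B_R(O)}\big(u_{c,g}T\overline{u_{c,g}}-\overline{u_{c,g}}Tu_{c,g}\big)\,{\rm d}s=0 .
\]
Expand $u_{c,g}=v_g+u^{\rm sc,+}_{c,g}$ and use the transmission conditions, exactly as in the proof of the second lemma. The $v_g$--$v_g$ term vanishes by Betti for the entire solution $v_g$. The cross terms are, by \eqref{eq0} and its complex conjugate, $\sqrt{8\pi\omega}\,(\langle g,F_cg\rangle-\overline{\langle g,F_cg\rangle})$ up to sign. Collecting terms should yield
\[
\sqrt{8\pi\omega}\,\big(\langle F_cg,g\rangle-\langle g,F_cg\rangle\big)=2i\omega\|F_cg\|^2 .
\]
This is the case $c_1=c_2$ of the second lemma: the boundary and difference terms there vanish, and the remaining volume term is real. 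I would actually just read it off from \eqref{maineq0}, where the left side equals the purely real right side. Rewriting the left side as $\langle (F_c-F_c^*)g,g\rangle$ times $\sqrt{8\pi\omega}$ and polarizing (the identity holds for all $g$) gives
\[
F_c-F_c^*=i\sqrt{\tfrac{\omega}{2\pi}}\,F_c^*F_c .
\]
The right side of \eqref{maineq0} is real, but the left side is $\sqrt{8\pi\omega}\cdot 2i\,{\rm Im}\langle F_cg,g\rangle-2i\omega\|F_cg\|^2$, which is purely imaginary, so both vanish. Constants must be checked against the inner product normalization; the claimed $\sqrt{\omega/2\pi}=2\omega/\sqrt{8\pi\omega}$ matches.

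\textbf{Step 2: the isometry.} With $\alpha=\sqrt{\omega/2\pi}$, compute
\[
S_c^*S_c=I+i\alpha(F_c-F_c^*)+\alpha^2F_c^*F_c=I-\alpha^2F_c^*F_c+\alpha^2F_c^*F_c=I .
\]

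\textbf{Step 3: from isometry to unitarity.} An isometry need not be unitary in infinite dimensions, so I also need $S_cS_c^*=I$, equivalently $F_c-F_c^*=i\alpha F_cF_c^*$. Two routes are available.

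\emph{Route (a):} $F_c$ is compact, since it has a smooth kernel. Then $S_c=I+\text{compact}$ is Fredholm of index zero. Being injective as an isometry, it is surjective, hence unitary. This is the route I would take.

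\emph{Route (b):} use reciprocity $u^\infty_c(\hat x,d)$ versus $u^\infty_c(-d,-\hat x)$. This gives $F_c^*=RF_c^\ast{}^{\!\top}$-type symmetry and transfers the identity to $F_cF_c^*$.

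The main obstacle is Step 1: tracking the signs and constants in the boundary terms so that exactly the factor $\sqrt{\omega/2\pi}$ emerges. The alternative is simply to cite \cite{S2005} as the paper does.
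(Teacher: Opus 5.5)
Your proof is correct, and it takes a genuinely different route from the paper, which gives no proof and simply imports the statement from \cite{S2005}.

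You make the lemma self-contained. Specializing \eqref{maineq0} to $c_1=c_2=c$ gives $F_c-F_c^*=i\sqrt{\omega/2\pi}\,F_c^*F_c$, hence $S_c^*S_c=I$. You then obtain $S_cS_c^*=I$ from compactness of $F_c$ and the Fredholm alternative. The standard argument (as in the acoustic case) instead proves the companion identity $F_c-F_c^*=i\sqrt{\omega/2\pi}\,F_cF_c^*$ from reciprocity of the far field patterns.

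There is no circularity, since \eqref{eq0}, \eqref{F0} and \eqref{maineq0} are proved without using unitarity. Your Step 3 is really needed: Lemma \ref{lem2} uses $S_cS_c^*=I$ to pass from $h=S_c^*w_B^\infty=S_c^*w_D^\infty$ to $w_B^\infty=w_D^\infty=S_ch$.

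Two small remarks:
\begin{itemize}
\item For $c_1=c_2$ the right-hand side of \eqref{maineq0} vanishes term by term, since each integrand contains $u_{c_2,g}-u_{c_1,g}$, $E_2-E_1$ or $\rho_1-\rho_2$. So the real-versus-imaginary argument is unnecessary.
\item Your Route (b) is not a well-formed statement as written, but Route (a) makes it superfluous.
\end{itemize}

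As for the trade-off, your route ties the constant $\sqrt{\omega/2\pi}$ directly to the paper's own normalizations of $v_g$, $F_c$ and the weighted inner product. It also avoids the elastic reciprocity relations altogether. However, it is only as reliable as the constants in \eqref{eq0}--\eqref{F0}. The citation, or a reciprocity proof, does not depend on those identities, but it requires that the conventions of \cite{S2005} agree with the ones used here.
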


\begin{remark}
Since the adjoint of the scattering operator $S_{c_{1}}$ is given by
\begin{equation*}
S_{c_{1}}^{*}=I-i\sqrt{\frac{\omega}{2\pi}}F_{c_{1}}^{*},
\end{equation*}
we find that
\begin{equation*}
S_{c_{1}}^{*}\left(F_{c_{2}}-F_{c_{1}}\right)=F_{c_{2}}-F_{c_{1}}-i\sqrt{\frac{\omega}{2\pi}} \left(F_{c_{1}}^{*}F_{c_{2}}-F_{c_{1}}^{*}F_{c_{1}}\right),
\end{equation*}
and accordingly
\begin{equation*}
\Re\left(S_{c_{1}}^{*}\left(F_{c_{2}}-F_{c_{1}}\right)\right)=\Re\left( F_{c_{2}}-F_{c_{1}} -i\sqrt{\frac{\omega}{2\pi}}F_{c_{1}}^{*}F_{c_{2}} \right).
\end{equation*}
Therefore the real part of the first two terms on the left-hand side of \eqref{maineq0} fulfills
\begin{align*}
&\Re\left(\sqrt{8\pi\omega}\left(\left\langle F_{c_{1}}g,g\right\rangle-\left\langle g,F_{c_{2}}g\right\rangle\right) -2i\omega\left\langle F_{c_{1}}g,F_{c_{2}}g\right\rangle \right) \\
=&-\sqrt{8\pi\omega}\,\,\Re\left(\left\langle g,F_{c_{2}}g\right\rangle-\left\langle F_{c_{1}}g,g\right\rangle +i\sqrt{\frac{\omega}{2\pi}}\left\langle F_{c_{1}}g,F_{c_{2}}g \right\rangle \right) \\
=&-\sqrt{8\pi\omega}\,\,\Re\left(\left\langle F_{c_{2}}g,g\right\rangle-\left\langle F_{c_{1}}g,g\right\rangle -i\sqrt{\frac{\omega}{2\pi}}\left\langle F_{c_{2}}g,F_{c_{1}}g \right\rangle \right) \\
=&-\sqrt{8\pi\omega}\,\,\Re\left\langle S_{c_{1}}^{*}\left(F_{c_{2}}-F_{c_{1}}\right)g, g\right\rangle.
\end{align*}
That is,
\begin{align} \label{maineq1}
&\sqrt{8\pi\omega}\,\,\Re\left\langle S_{c_{1}}^{*}\left(F_{c_{2}}-F_{c_{1}}\right)g, g\right\rangle+\int_{B_R(O)}E_{\lambda_{2}-\lambda_{1},\mu_{2}-\mu_{1}}(\overline{u}_{c_1,g}, u_{c_1,g})+(\rho_{1}-\rho_{2})\omega^{2}|u_{c_1,g}|^{2}{\rm d}y \\
=&\int_{B_R(O)}E_{2}(u_{c_1,g}-u_{c_2,g},\overline{u}_{c_1,g}-\overline{u}_{c_2,g})-\rho_{2} \omega^{2}|u_{c_2,g}-u_{c_1,g}|^{2}{\rm d}y \nonumber \\
&-\Re\left(\int_{\partial B_R(O)}(\overline{u}_{c_2,g} -\overline{u}_{c_1,g}) (T_{2}u_{c_2,g}-T_{1}u_{c_1,g}){\rm d}s\right).\nonumber
\end{align}
\end{remark}

Next we consider the right-hand side of \eqref{maineq1}, and we show that it is nonnegative if $g$ belongs to the complement of a certain finite dimensional subspace $V\subseteq [L^{2}(\mathbb{S})]^{2}$. To that end we denote by $J:[H^{1}(B_{R}(O))]^{2} \rightarrow [L^{2}(B_{R}(O))]^{2}$ the compact embedding for any ball $B_{R}(O)$ containing $\Omega$, and accordingly we define, for any $\rho\in L_{+}^{\infty}(\mathbb{R}^{2})$, the operator $K:[H^{1}(B_{R}(O))]^{2}\rightarrow [H^{1}(B_{R}(O))]^{2}$ by
\begin{equation*}
Kv:=J^{*}Jv,
\end{equation*}
and $K_{\rho}:[H^{1}(B_{R}(O))]^{2}\rightarrow [H^{1}(B_{R}(O))]^{2}$ by
\begin{equation*}
K_{\rho}v:=\rho J^{*}Jv.
\end{equation*}
The special identity operator $I_{\lambda,\mu}:[H^{1}(B_{R}(O))]^{2} \rightarrow [H^{1}(B_{R}(O))]^{2}$ is defined by
\begin{equation*}
\langle I_{\lambda,\mu}v,w\rangle_{[H^{1}(B_{R}(O))]^{2}}= \int_{B_R(O)}E_{\lambda,\mu}(v,\overline{w})+v\,\overline{w}\,{\rm d}y.
\end{equation*}
Then $K$ and $K_{\rho}$ are compact self-adjoint linear operators, and, for any $v\in [H^{1}(B_{R}(O))]^{2}$,
\begin{equation*}
\langle(I_{\lambda,\mu}-K-\omega^{2}K_{\rho})v,v\rangle_{[H^{1}(B_{R}(O))]^{2}}= \int_{B_R(O)}E_{\lambda,\mu}(v,\overline{v})-\rho\omega^{2}|v|^{2}{\rm d}y.
\end{equation*}

For $0<\varepsilon<R$ we denote by $N_{\varepsilon}: [H^1(B_R(O))]^{2}\rightarrow [L^2(\partial B_R(O))]^{2}$ the bounded linear operator that maps $v\in [H^1(B_R(O))]^{2}$ to the stress vector $T_{0}v_{\varepsilon}$ on $\partial B_R(O)$ of the radiating solution to the exterior boundary value problem
\begin{equation*}
\Delta^{*}_{0} v_{\varepsilon}+\rho_{0}\omega^{2}v_{\varepsilon}=0 \quad \text { in } \mathbb{R}^2\backslash \overline{B_{R-\varepsilon}(O)}, \quad v_{\varepsilon}=v \quad \text { on } \partial B_{R-\varepsilon}(O),
\end{equation*}
and $\Lambda: [L^2(\partial B_R(O))]^{2} \rightarrow [L^2(\partial B_R(O))]^{2}$ denotes the compact exterior Neumann-to-Dirichlet operator that maps $\psi\in [L^2(\partial B_R(O))]^{2}$ to the trace $w|_{\partial B_{R}(O)}$ of the radiating solution to
\begin{equation*}
\Delta^{*}_{0}w+\rho_{0}\omega^{2}w=0 \quad \text { in } \mathbb{R}^2\backslash \overline{B_R(O)}, \quad T_{0}w=\psi \quad \text { on } \partial B_R(O).
\end{equation*}
Then,
\begin{equation*}
N_{\varepsilon}v=T_{0}v|_{\partial B_R(O)} \quad \text { and } \quad \Lambda N_{\varepsilon}v=v|_{\partial B_R(O)},
\end{equation*}
and accordingly
\begin{equation*}
\langle N_{\varepsilon}^* \Lambda N_{\varepsilon}v,v\rangle_{[H^1(B_R(O))]^{2}} =\langle\Lambda N_{\varepsilon} v, N_{\varepsilon} v\rangle_{[L^2(\partial B_R(O))]^{2}}=\int_{\partial B_R(O)} v\,\, T_{0}\overline{v}\,\mathrm{~d} s
\end{equation*}
for any $v\in [H^1(B_R(O))]^{2}$ that can be extended to a radiating solution of the Navier equation
\begin{equation*}
\Delta^{*}_{0}v+\rho_{0}\omega^{2}v=0 \quad \text { in } \mathbb{R}^2 \backslash \overline{B_{R-\varepsilon}(O)}.
\end{equation*}

\begin{lemma}  \label{lem0}
Let $\lambda_{j}$, $\mu_{j}$, $\rho_{j}\in L_{+}^{\infty}(\mathbb{R}^2)$ and let $B_R(O)$ be a ball containing $\Omega$. Then there exists a finite dimensional subspace $V \subseteq [L^2(\mathbb{S})]^{2}$ such that
\begin{align*}
&\int_{B_R(O)}E_{2}(u_{c_{1},g}-u_{c_{2},g},\overline{u}_{c_{1},g}-\overline{u}_{c_{2},g}) -\rho_{2}\omega^{2}|u_{c_{2},g}-u_{c_{1},g}|^{2}{\rm d}y \\
&-\Re\left(\int_{\partial B_R(O)}(\overline{u}_{c_{2},g}-\overline{u}_{c_{1},g}) (T_{2}u_{c_{2},g}-T_{1}u_{c_{1},g}){\rm d}s\right)\geq0, \quad \text{ for all } g\in V^{\bot}.
\end{align*}
\end{lemma}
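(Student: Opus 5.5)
Following the Helmholtz analogue of Griesmaier and Harrach, we set $w_g:=u_{c_1,g}-u_{c_2,g}$ and express the left-hand side as a quadratic form in $w_g$: a coercive part minus a compact part. Since $c_1$ and $c_2$ coincide with the background $(\lambda_0,\mu_0,\rho_0)$ outside $\Omega\subset B_{R-\varepsilon}(O)$, both total fields solve the background Navier equation in $\mathbb{R}^2\setminus\overline{B_{R-\varepsilon}(O)}$. Their difference equals $u^{\rm sc}_{c_1,g}-u^{\rm sc}_{c_2,g}$ there and is therefore a radiating solution. Near $\partial B_R(O)$ we also have $T_1=T_2=T_0$, so the boundary integrand becomes $\overline{w}_g\,T_0 w_g$ (the overall sign is absorbed by the minus sign in front). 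Using the operators $N_\varepsilon$ and $\Lambda$ introduced above, this gives
\begin{equation*}
\int_{\partial B_R(O)}\overline{w}_g\,T_0 w_g\,{\rm d}s=\overline{\langle N_\varepsilon^*\Lambda N_\varepsilon w_g,w_g\rangle}_{[H^1(B_R(O))]^2}.
\end{equation*}
The volume terms equal $\langle (I_{\lambda_2,\mu_2}-K-\omega^2K_{\rho_2})w_g,w_g\rangle$. Hence the left-hand side of the statement is
\begin{equation*}
\big\langle \big(I_{\lambda_2,\mu_2}-K-\omega^2K_{\rho_2}-\Re(N_\varepsilon^*\Lambda N_\varepsilon)\big)w_g,w_g\big\rangle_{[H^1(B_R(O))]^2}.
\end{equation*}

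The next step is a spectral argument. Since $\Lambda$ is compact and $N_\varepsilon$ is bounded, $\Re(N_\varepsilon^*\Lambda N_\varepsilon)$ is compact and self-adjoint, and so are $K$ and $K_{\rho_2}$. The operator $I_{\lambda_2,\mu_2}$ is self-adjoint and coercive on $[H^1(B_R(O))]^2$, because $\int E_{\lambda,\mu}(v,\overline v)+|v|^2$ is equivalent to the $H^1$ norm by Korn's inequality (using $\mu_2>0$ and $\mu_2+\lambda_2>0$). Writing $I_{\lambda_2,\mu_2}=A^{1/2}A^{1/2}$, the operator above is congruent to $I-A^{-1/2}CA^{-1/2}$ with $C$ compact self-adjoint. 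The spectral theorem then gives a finite dimensional subspace $W\subseteq[H^1(B_R(O))]^2$, spanned by the eigenvectors with eigenvalues larger than $1$ (mapped back through $A^{-1/2}$), such that the form is nonnegative on the orthogonal complement $W^{\perp_{I}}$ taken in the $I_{\lambda_2,\mu_2}$-inner product.

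It remains to pull $W$ back to $g$-space. The map $L:g\mapsto w_g|_{B_R(O)}$ is bounded and linear from $[L^2(\mathbb{S})]^2$ into $[H^1(B_R(O))]^2$, by well-posedness of both scattering problems and boundedness of $g\mapsto v_g$. Letting $P$ be the $I_{\lambda_2,\mu_2}$-orthogonal projection onto $W$, we set
\begin{equation*}
V:=\overline{\operatorname{ran}}\,(PL)^*=(\ker PL)^{\perp},
\end{equation*}
which has dimension at most $\dim W<\infty$. Then $g\in V^\perp$ gives $PLg=0$, so $w_g\in W^{\perp_I}$, and the form is nonnegative.

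The main obstacle is the identification of the boundary term. One must check that the transmission conditions on $\partial B_R(O)$ make $T_2u_{c_2,g}-T_1u_{c_1,g}=T_0w_g$ with the correct conjugation, and that the formula $\langle N_\varepsilon^*\Lambda N_\varepsilon v,v\rangle=\int_{\partial B_R(O)}v\,T_0\overline v\,{\rm d}s$ applies to $v=w_g$, which needs the extension to be radiating outside $B_{R-\varepsilon}(O)$. One must also confirm that taking the real part commutes with the quadratic form, since $\Re$ of the form equals the form of $\tfrac12(N_\varepsilon^*\Lambda N_\varepsilon+(N_\varepsilon^*\Lambda N_\varepsilon)^*)$. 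The remaining steps are routine compact-perturbation arguments.
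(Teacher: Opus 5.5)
Your proposal is correct and follows essentially the same route as the paper: you rewrite the left-hand side as the quadratic form of $I_{\lambda_2,\mu_2}-K-\omega^2K_{\rho_2}-\Re(N_\varepsilon^*\Lambda N_\varepsilon)$ in the field difference, take the finite-dimensional eigenspace $W$, and pull it back to $V$ via the adjoint of $g\mapsto w_g|_{B_R(O)}$, as the paper does. Your square-root step and your $I_{\lambda_2,\mu_2}$-orthogonal complement make rigorous a point the paper glosses over, since it selects eigenvalues larger than $1$ as if $I_{\lambda_2,\mu_2}$ were the identity for the standard $H^1$ inner product; your only slip is cosmetic, because no sign needs absorbing: $(-\overline{w}_g)\,T_0(-w_g)=\overline{w}_g\,T_0 w_g$.
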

\begin{proof}
Let $\varepsilon>0$ be sufficiently small, so that $\Omega \subseteq B_{R-\varepsilon}(O)$. Then
\begin{align*}
&\int_{B_R(O)}E_{2}(u_{c_{1},g}-u_{c_{2},g},\overline{u}_{c_{1},g}-\overline{u}_{c_{2},g}) -\rho_{2}\omega^{2}|u_{c_{2},g}-u_{c_{1},g}|^{2}{\rm d}y \\
&-\Re\left(\int_{\partial B_R(O)}(\overline{u}_{c_{2},g}-\overline{u}_{c_{1},g}) (T_{2}u_{c_{2},g}-T_{1}u_{c_{1},g}){\rm d}s\right) \\
=&\int_{B_R(O)}E_{2}(w,\overline{w})-\rho_{2} \omega^{2}|w|^{2}{\rm d}y-\Re\left(\int_{\partial B_R(O)}\overline{w}\,\,T_{0}w\,{\rm d}s\right) \\
=&\langle(I_{\lambda_{2},\mu_{2}}-K-\omega^2 K_{\rho_{2}}-{\Re}(N_{\varepsilon}^*\Lambda N_{\varepsilon}))w,w\rangle_{[H^1(B_R(O))]^{2}}
\end{align*}
where $w|_{B_{R}(O)}:=u_{c_{2},g}^{{\rm sc,-}}-u_{c_{1},g}^{{\rm sc,-}}$ and $w|_{\partial B_{R}(O)}:=u_{c_{2},g}^{{\rm sc,+}}-u_{c_{1},g}^{{\rm sc,+}}$.

Let $W$ be the sum of eigenspaces of the compact self-adjoint operator $K+\omega^2 K_{\rho_2} +{\rm Re}(N_{\varepsilon}^* \Lambda N_{\varepsilon})$ associated to eigenvalues larger than 1. Then $W$ is finite dimensional and
\begin{equation*}
\langle(I_{\lambda_{2},\mu_{2}}-K-\omega^2 K_{\rho_{2}}-{\Re}(N_{\varepsilon}^*\Lambda N_{\varepsilon}))w,w\rangle_{[H^1(B_R(O))]^{2}}\geq0 \quad {\rm for~all~}w\in W^{\perp}.
\end{equation*}
For $j=1,2$ we denote by $\mathcal{S}_j: [L^2(\mathbb{S})]^{2}\rightarrow [H^1(B_R(O))]^{2}$ the bounded linear operator that maps $g\in [L^2(\mathbb{S})]^{2}$ to the restriction of the scattered field $u_{c_{j},g}^{{\rm sc,-}}$ on $B_R(O)$. Then $w|_{B_{R}(O)}=(\mathcal{S}_{2} -\mathcal{S}_{1})g$.

Since, for any $g\in [L^2(\mathbb{S})]^{2}$,
\begin{equation*}
(\mathcal{S}_{2}-\mathcal{S}_{1})g\in W^{\bot} \quad\quad \text{ if and only if } \quad \quad  g\in ((\mathcal{S}_{2}-\mathcal{S}_{1})^{*}W)^{\bot},
\end{equation*}
and of course ${\rm dim}((\mathcal{S}_{2}-\mathcal{S}_{1})^{*}W)\leq {\rm dim}(W)<\infty$, choosing $V:=(\mathcal{S}_{2}-\mathcal{S}_{1})^{*}W$ ends the proof.
\end{proof}

Applying the above Lemma \ref{lem0} in the equality \eqref{maineq1} yields the main monotonicity inequalities \eqref{main1}-\eqref{main2} we will be using.
\begin{theorem}  \label{main}
Let $\lambda_{j}$, $\mu_{j}$, $\rho_{j}\in L_{+}^{\infty}(\mathbb{R}^2)$. Then there exists a finite dimensional subspace $V\subseteq [L^2(\mathbb{S})]^{2}$ such that
\begin{equation} \label{main1}
\sqrt{8\pi\omega}\,\,\Re\langle S_{c_{1}}^{*}(F_{c_{2}}-F_{c_{1}})g, g\rangle \geq \int_{\mathbb{R}^{2}}E_{\lambda_{1}-\lambda_{2},\mu_{1}-\mu_{2}}(\overline{u}_{c_{1},g},u_{c_{1},g}) +(\rho_{2}-\rho_{1})\omega^{2}|u_{c_{1},g}|^{2}{\rm d}y,
\end{equation}
for all $g\in V^{\bot}$. In particular,
\begin{equation} \label{main2}
\lambda_{1}\geq\lambda_{2}, \, \mu_{1}\geq\mu_{2}, \, \rho_{2}\geq \rho_{1} \quad {\rm implies} \quad \Re(S_{c_{1}}^{*}F_{c_{2}}) \geq_{{\rm fin}} \Re(S_{c_{1}}^{*}F_{c_{1}}).
\end{equation}
\end{theorem}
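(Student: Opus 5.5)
The plan is to combine the identity \eqref{maineq1} from the Remark with Lemma~\ref{lem0}. Fix $g\in[L^2(\mathbb{S})]^2$ and rearrange \eqref{maineq1} so that $\sqrt{8\pi\omega}\,\Re\langle S_{c_1}^*(F_{c_2}-F_{c_1})g,g\rangle$ stands alone on the left. The right-hand side then consists of two parts:
\begin{itemize}
\item[(i)] the volume-minus-boundary expression $\int_{B_R(O)}E_2(u_{c_1,g}-u_{c_2,g},\overline{u}_{c_1,g}-\overline{u}_{c_2,g})-\rho_2\omega^2|u_{c_2,g}-u_{c_1,g}|^2\,{\rm d}y-\Re\int_{\partial B_R(O)}(\cdots)\,{\rm d}s$;
\item[(ii)] the term $-\int_{B_R(O)}E_{\lambda_2-\lambda_1,\mu_2-\mu_1}(\overline{u}_{c_1,g},u_{c_1,g})+(\rho_1-\rho_2)\omega^2|u_{c_1,g}|^2\,{\rm d}y$.
\end{itemize}

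By Lemma~\ref{lem0} there is a finite-dimensional $V$ such that part (i) is nonnegative for all $g\in V^\perp$. For such $g$ we drop (i) and obtain a lower bound by (ii). Since $E_{\lambda,\mu}$ is linear in $(\lambda,\mu)$, we have $-E_{\lambda_2-\lambda_1,\mu_2-\mu_1}=E_{\lambda_1-\lambda_2,\mu_1-\mu_2}$ and $-(\rho_1-\rho_2)=\rho_2-\rho_1$. This is exactly the integrand in \eqref{main1}, except that it is integrated over $B_R(O)$ instead of $\mathbb{R}^2$.

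To pass from $B_R(O)$ to $\mathbb{R}^2$, I use that the coefficient differences vanish outside the ball: the $c_j$ are background perturbations supported in $\Omega\subseteq B_R(O)$, which is implicit in the setting where both scatterers coincide with $(\lambda_0,\mu_0,\rho_0)$ outside $\Omega$. The integrand is therefore zero on $\mathbb{R}^2\setminus B_R(O)$. This is the only point needing care. If the paper intends general $L^\infty_+$ coefficients, one would instead take $R$ large enough to contain the supports of $\lambda_j-\lambda_0$, $\mu_j-\mu_0$, $\rho_j-\rho_0$, which is what ``$B_R(O)$ containing $\Omega$'' encodes.

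For \eqref{main2}, assume $\lambda_1\ge\lambda_2$, $\mu_1\ge\mu_2$, $\rho_2\ge\rho_1$. The key step is to check that $E_{a,b}(\overline{u},u)\ge0$ pointwise for $a,b\ge0$. Writing $E_{a,b}(\overline{u},u)$ in terms of the divergence and the symmetric gradient gives
$$E_{a,b}(\overline{u},u)=a|\nabla\cdot u|^2+2b|\varepsilon(u)|^2\ge0,$$
which follows by grouping the four brackets in the definition of $E$, namely $a$ times $|\partial_1u_1+\partial_2u_2|^2$ and $b$ times the symmetric-gradient terms including $|\partial_2u_1+\partial_1u_2|^2$. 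Together with $(\rho_2-\rho_1)\omega^2|u_{c_1,g}|^2\ge0$, this makes the right-hand side of \eqref{main1} nonnegative for $g\in V^\perp$.

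It remains to convert this into a statement about operators. By linearity, $\Re\langle S_{c_1}^*(F_{c_2}-F_{c_1})g,g\rangle=\langle(\Re(S_{c_1}^*F_{c_2})-\Re(S_{c_1}^*F_{c_1}))g,g\rangle$, so this quantity is $\ge0$ on $V^\perp$ with $\dim V<\infty$. That is precisely the definition of $\ge_{\rm fin}$, which gives \eqref{main2}.
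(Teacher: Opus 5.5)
Your proposal is correct and follows the paper's route exactly. The paper's proof consists only of the remark that inserting Lemma~\ref{lem0} into the identity \eqref{maineq1} yields \eqref{main1}--\eqref{main2}. You have filled in the sign bookkeeping, the passage from $B_R(O)$ to $\mathbb{R}^2$ (the coefficient differences vanish outside $\Omega\subseteq B_R(O)$), and the pointwise identity $E_{a,b}(\overline{u},u)=a|\nabla\cdot u|^2+2b|\widehat{\nabla}u|^2$, which the paper only writes down later in Section~5. One step deserves explicit mention: going from ``nonnegative on $V^\perp$ with $\dim V<\infty$'' to $\geq_{\rm fin}$, in the sense of finitely many negative eigenvalues, uses the standard equivalence for compact self-adjoint operators. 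That equivalence applies here because $\Re(S_{c_1}^*(F_{c_2}-F_{c_1}))$ is compact and self-adjoint.
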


\begin{remark} \label{rem1}
Since the scattering operators $S_{c_{1}}$ and $S_{c_{2}}$ are unitary, we find that
\begin{align*}
&S_{c_{1}}^{*}(F_{c_{2}}-F_{c_{1}})=i\sqrt{\frac{2\pi}{\omega}}S_{c_{1}}^{*}(S_{c_{1}}-S_{c_{2}}) =i\sqrt{\frac{2\pi}{\omega}}(I-S_{c_{1}}^{*}S_{c_{2}})\\
&=\left( i\sqrt{\frac{2\pi}{\omega}}(S_{c_{2}}^{*}S_{c_{1}}-I)\right)^{*} =\left( i\sqrt{\frac{2\pi}{\omega}}S_{c_{2}}^{*}(S_{c_{1}}-S_{c_{2}})\right)^{*} =\left( S_{c_{2}}^{*}(F_{c_{2}}-F_{c_{1}})\right)^{*}.
\end{align*}
Recalling that the eigenvalues of a compact linear operator and of its adjoint are complex conjugates of each other, we conclude that the spectra of $\Re(S_{c_{1}}^{*}(F_{c_{2}} -F_{c_{1}}))$ and $\Re(S_{c_{2}}^{*}(F_{c_{2}}-F_{c_{1}}))$ coincide. Consequently, the monotonicity relations \eqref{main1}-\eqref{main2} remain true if we replace $S_{c_{1}}^{*}$ by $S_{c_{2}}^{*}$ in these formulas.
\end{remark}

Note that by interchanging $\lambda_{1}$, $\mu_{1}$, $\rho_{1}$ and $\lambda_{2}$, $\mu_{2}$, $\rho_{2}$, except for $S_{c_{1}}^{*}$ (see Remark \ref{rem1}), we may restate Theorem \ref{main} as follows.

\begin{corollary} \label{cor1}
Let $\lambda_{j}$, $\mu_{j}$, $\rho_{j}\in L_{+}^{\infty}(\mathbb{R}^2)$. Then there exists a finite dimensional subspace $V\subseteq[L^2(\mathbb{S})]^{2}$ such that
\begin{equation} \label{main3}
\sqrt{8\pi\omega}\,\,\Re\langle S_{c_{1}}^{*}(F_{c_{2}}-F_{c_{1}})g, g\rangle \leq \int_{\mathbb{R}^{2}}E_{\lambda_{1}-\lambda_{2},\mu_{1}-\mu_{2}}(\overline{u}_{c_{2},g}, u_{c_{2},g}) +(\rho_{2}-\rho_{1})\omega^{2}|u_{c_{2},g}|^{2}{\rm d}y,
\end{equation}
for all $g\in V^{\bot}$.
\end{corollary}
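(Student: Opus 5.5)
We derive \eqref{main3} from Theorem \ref{main} with the roles of $c_1$ and $c_2$ exchanged, and then use Remark \ref{rem1} to return to the operator $S_{c_1}^*$.

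\emph{Step 1 (swap the parameters).} Apply Theorem \ref{main} to the pair $(c_2,c_1)$ in place of $(c_1,c_2)$. This gives a finite dimensional subspace $V_1\subseteq[L^2(\mathbb{S})]^2$ such that, for all $g\in V_1^\perp$,
\begin{equation*}
\sqrt{8\pi\omega}\,\Re\langle S_{c_2}^*(F_{c_1}-F_{c_2})g,g\rangle\geq\int_{\mathbb{R}^2}E_{\lambda_2-\lambda_1,\mu_2-\mu_1}(\overline{u}_{c_2,g},u_{c_2,g})+(\rho_1-\rho_2)\omega^2|u_{c_2,g}|^2\,{\rm d}y.
\end{equation*}
The form $E_{\lambda,\mu}$ is linear in $(\lambda,\mu)$, so $E_{\lambda_2-\lambda_1,\mu_2-\mu_1}=-E_{\lambda_1-\lambda_2,\mu_1-\mu_2}$. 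Multiplying the inequality by $-1$ therefore yields
\begin{equation*}
\sqrt{8\pi\omega}\,\Re\langle S_{c_2}^*(F_{c_2}-F_{c_1})g,g\rangle\leq\int_{\mathbb{R}^2}E_{\lambda_1-\lambda_2,\mu_1-\mu_2}(\overline{u}_{c_2,g},u_{c_2,g})+(\rho_2-\rho_1)\omega^2|u_{c_2,g}|^2\,{\rm d}y
\end{equation*}
for all $g\in V_1^\perp$. This is \eqref{main3}, except that $S_{c_2}^*$ appears instead of $S_{c_1}^*$.

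\emph{Step 2 (replace $S_{c_2}^*$ by $S_{c_1}^*$).} This is the main obstacle. Remark \ref{rem1} shows that $A:=S_{c_1}^*(F_{c_2}-F_{c_1})$ and $B:=S_{c_2}^*(F_{c_2}-F_{c_1})$ satisfy $A=B^*$. Both operators are compact, since each $F_{c_j}$ is compact.

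Taking real parts of quadratic forms gives $\Re\langle Ag,g\rangle=\Re\overline{\langle Bg,g\rangle}=\Re\langle Bg,g\rangle$ for every $g$, because $\langle B^*g,g\rangle=\langle g,Bg\rangle=\overline{\langle Bg,g\rangle}$. The two quadratic forms therefore agree pointwise, and we may take $V:=V_1$. This is sharper than the spectral argument in Remark \ref{rem1}, which only shows that the spectra coincide.

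Should the inner-product convention make $A=B^*$ hold only up to the weighting, we would fall back on the spectral route. The self-adjoint compact operators $\Re A$ and $\Re B$ have the same eigenvalues. An inequality of the form $\Re\langle Bg,g\rangle\le\langle Mg,g\rangle$ on a subspace of finite codimension then transfers to $A$, at the cost of enlarging $V$ by a finite dimensional space built from the eigenspaces involved.

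Combining Step 1 with the identity $\Re\langle Ag,g\rangle=\Re\langle Bg,g\rangle$ proves \eqref{main3} for all $g\in V^\perp$.
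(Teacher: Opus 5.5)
Your proof is correct and follows the paper's route. You interchange $c_1$ and $c_2$ in Theorem~\ref{main}, negate using the linearity of $E_{\lambda,\mu}$ in $(\lambda,\mu)$, and then exchange $S_{c_2}^*$ for $S_{c_1}^*$ via the adjoint identity $S_{c_1}^*(F_{c_2}-F_{c_1})=\bigl(S_{c_2}^*(F_{c_2}-F_{c_1})\bigr)^*$ from Remark~\ref{rem1}.

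Your Step 2 is actually tighter than the paper's wording. The identity gives your $\Re A=\Re B$ as operators, with all adjoints taken in the same weighted inner product in which $S_c$ is unitary. That is exactly what is needed to transfer an inequality against the fixed quadratic form on the right of \eqref{main3}. Coincidence of spectra alone, which is the reason stated in Remark~\ref{rem1} and the basis of your fallback paragraph, only controls eigenvalue counts as in \eqref{main2}. So the fallback would not work and should simply be dropped.
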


\section{Localized potentials for the Navier equation}

In this section we establish the existence of localized wave functions that have arbitrarily large norm on some prescribed region $B\subseteq\mathbb{R}^{2}$ while at the same time having arbitrarily small norm in a different region $D\subseteq\mathbb{R}^{2}$, assuming that $\mathbb{R}^{2} \backslash\overline{D}$ is connected. These will be utilized to establish a rigorous characterization of the region $\Omega={\rm supp}(\lambda-\lambda_{0})\cup {\rm supp}(\mu-\mu_{0})\cup {\rm supp}(\rho-\rho_{0})$ where the material parameters differ from background in terms of the far field operator using the monotonicity relations from Theorem \ref{main} and Corollary \ref{cor1} in section 5 below.

\begin{lemma} \label{lem1}
Suppose that $\lambda$, $\mu$, $\rho\in L_{+}^{\infty}(\mathbb{R}^2)$ and assume that $D \subseteq \mathbb{R}^2$ is open and bounded. We define
\begin{equation*}
L_{c,D}: [L^2(\mathbb{S})]^{2} \rightarrow [H^1(D)]^{2}, \quad g \mapsto u_{c,g}|_D,
\end{equation*}
\begin{equation*}
L_{c,D}^{(1)}: [L^2(\mathbb{S})]^{2} \rightarrow [L^2(D)]^{2}, \quad g \mapsto u_{c,g}|_D,
\end{equation*}
\begin{equation*}
L_{c,D}^{(2)}: [L^2(\mathbb{S})]^{2} \rightarrow L^2(D), \quad g \mapsto \nabla\cdot u_{c,g}|_D,
\end{equation*}
\begin{equation*}
L_{c,D}^{(3)}: [L^2(\mathbb{S})]^{2} \rightarrow [L^2(D)]^{2\times2}, \quad g \mapsto \widehat{\nabla} u_{c,g}|_D,
\end{equation*}
where $u_{c,g} \in [H_{{\rm loc}}^1(\mathbb{R}^2)]^{2}$ is given by \eqref{ug} and $\widehat{\nabla}u:=\frac{1}{2} \left( \nabla u+(\nabla u)^{\top}\right)$. Then $L_{c,D}$, $L_{c,D}^{(1)}$, $L_{c,D}^{(2)}$ and $L_{c,D}^{(3)}$ are linear operators and their dual operator are given by
\begin{equation*}
L_{c,D}^{'}: [H^1(D)'\,]^{2} \rightarrow [L^2(\mathbb{S})]^{2}, \quad f_{0} \mapsto S_c^* w_{0}^{\infty};
\end{equation*}
\begin{equation*}
L_{c,D}^{(1)'}: [L^2(D)]^{2} \rightarrow [L^2(\mathbb{S})]^{2}, \quad f_{1} \mapsto S_c^* w_{1}^{\infty};
\end{equation*}
\begin{equation*}
L_{c,D}^{(2)'}: L^2(D) \rightarrow [L^2(\mathbb{S})]^{2}, \quad f_{2} \mapsto S_c^* w_{2}^{\infty};
\end{equation*}
\begin{equation*}
L_{c,D}^{(3)'}: [L^2(D)]^{2\times2} \rightarrow [L^2(\mathbb{S})]^{2}, \quad f_{3} \mapsto S_c^* w_{3}^{\infty}
\end{equation*}
where $S_c$ denotes the scattering operator, and $w_{j}^{\infty}\in [L^2(\mathbb{S})]^{2}$ $(j=0,1,2,3)$ is the far field pattern of the radiating solution $w_{j}\in [H_{{\rm loc}}^1(\mathbb{R}^2)]^{2}$ to
\begin{equation*} 
\sqrt{8\pi\omega}(f_{0},v)=\int_{B_R(O)}\left(E_{\lambda,\mu}(w_{0},\overline{v})-\rho\omega^2 w_{0} \overline{v}\right){\rm d}x-\int_{\partial B_R(O)} \overline{v}\, T_{\lambda,\mu}w_{0}\mathrm{~d}s, 
\end{equation*}
\begin{equation*} 
\sqrt{8\pi\omega}\int_{B_R(O)}f_{1}\,v\,{\rm d}x=\int_{B_R(O)}\left(E_{\lambda,\mu}(w_{1}, \overline{v}) -\rho\omega^2 w_{1} \overline{v}\right){\rm d}x-\int_{\partial B_R(O)} \overline{v}\, T_{\lambda,\mu}w_{1}\mathrm{~d}s, 
\end{equation*}
\begin{equation*} 
\sqrt{8\pi\omega}\int_{B_R(O)}f_{2}\,\nabla\cdot v\,{\rm d}x =\int_{B_R(O)}\left(E_{\lambda,\mu}(w_{2},\overline{v}) -\rho\omega^2 w_{2} \overline{v}\right){\rm d}x -\int_{\partial B_R(O)} \overline{v}\, T_{\lambda,\mu}w_{2}\mathrm{~d}s, 
\end{equation*}
\begin{equation*} 
\sqrt{8\pi\omega}\int_{B_R(O)}f_{3}:\,\widehat{\nabla}v\,{\rm d}x =\int_{B_R(O)}\left(E_{\lambda,\mu}(w_{3},\overline{v}) -\rho\omega^2 w_{3} \overline{v}\right){\rm d}x -\int_{\partial B_R(O)} \overline{v}\, T_{\lambda,\mu}w_{3}\mathrm{~d}s, 
\end{equation*}
for all $v\in[H^1(B_R(O))]^{2}$ with $D\subseteq B_R(O)$ (the round brackets denote the dual pairing between $H^1(D)$ and its dual space $H^1(D)'$, and $\mathrm{A}:\mathrm{B} =\sum\limits_{i,j=1}^{2}a_{ij}b_{ij}$ for matrices $\mathrm{A}=(a_{ij})$ and $\mathrm{B}=(b_{ij})$).
\end{lemma}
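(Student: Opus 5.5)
The plan is to handle the most general operator $L_{c,D}$ directly. The other three then follow by specialization: $L^{(1)}_{c,D}$, $L^{(2)}_{c,D}$ and $L^{(3)}_{c,D}$ are obtained from $L_{c,D}$ by composing with the bounded maps $[H^1(D)]^2\to [L^2(D)]^2$, $v\mapsto \nabla\cdot v$ and $v\mapsto\widehat\nabla v$. Their duals are therefore $L'_{c,D}$ applied to the functionals $v\mapsto\int_D f_1 v$, $v\mapsto \int_D f_2\nabla\cdot v$ and $v\mapsto\int_D f_3:\widehat\nabla v$, which is exactly the form of the right-hand sides in the variational problems for $w_1,w_2,w_3$.

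Linearity and boundedness of $g\mapsto u_{c,g}|_D$ come from the linearity of \eqref{ug} together with the well-posedness of the direct problem \eqref{a}--\eqref{radiation}. Existence of $w_0$ is a standard variational argument. The sesquilinear form on $B_R(O)$, coupled with the exterior radiating solution through the Dirichlet-to-Neumann map on $\partial B_R(O)$, is of Fredholm type (coercive plus compact), so uniqueness of radiating solutions gives solvability.

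For a fixed $f_0$, the key step is to compute $(f_0, u_{c,g}|_D)$ and rewrite it as $\langle g, S_c^*w_0^\infty\rangle$, where the bracket is the inner product on $[L^2(\mathbb{S})]^2$; this is also how I will read "dual operator", since the pairing uses no conjugation. I will test the defining relation for $w_0$ with $v=\overline{u_{c,g}}$ (or $u_{c,g}$, depending on conjugation conventions). Since $u_{c,g}$ solves the Navier equation in $B_R(O)$, Betti's formula applied to $u_{c,g}$ turns the volume term $\int_{B_R}E(w_0,u)-\rho\omega^2 w_0 u$ into the boundary integral $\int_{\partial B_R} w_0\,T u$. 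This leaves
\[
\sqrt{8\pi\omega}\,(f_0,u_{c,g})=\int_{\partial B_R(O)}\big(w_0\,T_0u_{c,g}-u_{c,g}\,T_0 w_0\big)\,{\rm d}s,
\]
where on $\partial B_R(O)$ the coefficients take their background values. I then split $u_{c,g}=v_g+u^{\rm sc}_{c,g}$ and treat the two pieces separately.

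\begin{itemize}
\item The $v_g$ part: the first identity \eqref{eq0} of the first lemma in Section~\ref{sec3} (the Herglotz–radiating boundary pairing, with $w_0$ in place of $u^{\rm sc}_{c,g}$) turns it into $\langle g, w_0^\infty\rangle$ up to conjugation.
\item The $u^{\rm sc}_{c,g}$ part: this is a pairing between two radiating solutions. By the computation leading to \eqref{F0} (letting the radius go to infinity), it equals a constant times $\langle F_cg, \overline{w_0^\infty}\rangle$-type far-field pairings.
\end{itemize}

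Adding the two pieces gives $\langle (I+ i\sqrt{\omega/2\pi}\,F_c)g, w_0^\infty\rangle=\langle S_cg,w_0^\infty\rangle=\langle g,S_c^*w_0^\infty\rangle$.

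The main obstacle is this bookkeeping. One has to track complex conjugations and the exact constants, namely $\sqrt{8\pi\omega}$, $-2i\omega$ and $\sqrt{\omega/2\pi}$, so that the combination reproduces precisely $S_c$ and not $S_c^*$ or a rescaled version. In particular, the radiating–radiating pairing $\int (w\,T_0u - u\,T_0w)$ without conjugation does not vanish as \eqref{F0} does; it is expressed through far-field patterns at opposite directions. I therefore expect to have to choose $v=\overline{u_{c,g}}$ and use \eqref{F0} in conjugated form so that the identity applies. I would first fix conventions with the single plane wave $g=\delta_d$ formally and check the constants against the definition of $S_c$ in Theorem 2 of \cite{S2005}, then extend by linearity and density.
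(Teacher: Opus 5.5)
Your skeleton is the paper's. You test the variational relation for $w_0$ with the total field, and use Betti's formula (since $u_{c,g}$ solves the Navier equation in $B_R(O)$) to reduce to a boundary pairing on $\partial B_R(O)$. You then split $u_{c,g}=v_g+u^{\rm sc}_{c,g}$ and evaluate the two pieces with \eqref{eq0} and \eqref{F0}. Obtaining $L^{(1)'}_{c,D},L^{(2)'}_{c,D},L^{(3)'}_{c,D}$ by composing $L_{c,D}$ with the embedding, the divergence and the symmetrized gradient is a clean substitute for the paper's remark that the calculations are the same. However, the conjugation step you single out as the main obstacle is resolved the wrong way round, and the computation as you display it does not produce $S_c$.

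With $v=\overline{u_{c,g}}$ you get the \emph{bilinear} pairing $\int_{\partial B_R(O)}(w_0\,T_0u_{c,g}-u_{c,g}\,T_0w_0)\,{\rm d}s$. For two radiating solutions this pairing is zero. By Betti's formula it is independent of the radius. As $r\to\infty$ the leading $r^{-1}$ terms cancel, because $T_0$ acts asymptotically on the compressional and on the shear part of both fields as multiplication by the same scalar factor. The $p$--$s$ cross terms vanish to leading order since $\hat x\cdot\hat x^{\bot}=0$. So the scattered part contributes nothing; it is not an $F_c$-term. The Herglotz part pairs $g(d)$ bilinearly with $w_0^\infty(-d)$, which is not \eqref{eq0} ``up to conjugation'', since conjugating $v_g$ also reflects the directions. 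You would end up with a formula for the transpose of $L_{c,D}$, built from reflected far fields and involving neither $F_c$ nor $S_c$. Your claim is therefore reversed: the unconjugated pairing vanishes, while the conjugated pairing \eqref{F0} does not.

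Reading the bracket as bilinear is also inconsistent, because the right-hand sides of the defining relations are antilinear in $v$ and the target $\langle g,S_c^*w_0^\infty\rangle$ is a Hilbert adjoint. Take $v=u_{c,g}$ and conjugate, so that $u_{c,g}$ is paired with $\overline{w_0}$:
\[
\sqrt{8\pi\omega}\,(L_{c,D}g,f_0)=\int_{\partial B_R(O)}\big(\overline{w_0}\,T_0u_{c,g}-u_{c,g}\,T_0\overline{w_0}\big)\,{\rm d}s .
\]
Now \eqref{eq0}, with $u^{\rm sc}_{c,g}$ replaced by $w_0$, gives exactly $\sqrt{8\pi\omega}\,\langle g,w_0^\infty\rangle$ for the $v_g$ part. \eqref{F0}, with the two scattered fields replaced by $u^{\rm sc}_{c,g}$ and $w_0$ and the sign flipped, gives $2i\omega\langle F_cg,w_0^\infty\rangle$ for the scattered part. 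Since $2\omega/\sqrt{8\pi\omega}=\sqrt{\omega/2\pi}$, the sum is $\sqrt{8\pi\omega}\langle S_cg,w_0^\infty\rangle=\sqrt{8\pi\omega}\langle g,S_c^*w_0^\infty\rangle$. This is exactly the paper's computation.
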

\begin{proof}
The representation formula for the total field in \eqref{ug} shows that $L_{c, D}$ is a Fredholm integral operator with square integrable kernel and therefore linear from $[L^2(\mathbb{S})]^{2}$ to $[H^1(D)]^{2}$.

Applying Betti's formula and the representation formula for the far field pattern $w_{0}^{\infty}$ of the radiating solution $w_{0}$, we find that, for any $g\in[L^2(\mathbb{S})]^{2}$ and $f_{0}\in[H^{1}(D)']^{2}$,
\begin{align*}
&\sqrt{8\pi\omega}(L_{c,D}g,f_{0}) =\int_{B_R(O)}\left(E_{\lambda,\mu}(\overline{w_{0}}, u_{c,g}) -\rho\omega^2 \overline{w_{0}}u_{c,g}\right) {\rm d}x-\int_{\partial B_R(O)}u_{c,g}\,\, T_{\lambda,\mu}\overline{w_{0}}\,{\rm d}s \\
=&\int_{\partial B_R(O)}\left(\overline{w_{0}}\,\, T_{\lambda,\mu}u_{c,g}-u_{c,g}\,\, T_{\lambda,\mu}\overline{w_{0}}\right)\,{\rm d}s \\
=&\int_{\partial B_R(O)}\left(\overline{w_{0}}\,\, T_{\lambda,\mu}v_{g}-v_{g}\,\, T_{\lambda,\mu}\overline{w_{0}}\right)\,{\rm d}s
+\int_{\partial B_R(O)}\left(\overline{w_{0}}\,\, T_{\lambda,\mu}u_{c,g}^{\rm sc}-u_{c,g}^{\rm sc}\,\, T_{\lambda,\mu}\overline{w_{0}}\right)\,{\rm d}s \\
=&\sqrt{8\pi\omega}\langle g,w_{0}^{\infty}\rangle+2i\omega\langle F_{c}g,w_{0}^{\infty}\rangle =\sqrt{8\pi\omega}\left\langle \left(I+i\sqrt{\frac{\omega}{2\pi}}F_{c}\right)g, w_{0}^{\infty} \right\rangle \\
=&\sqrt{8\pi\omega}\, \left\langle S_{c}\,g,w_{0}^{\infty} \right\rangle =\sqrt{8\pi\omega}\, \left\langle g,S_{c}^{*}\,w_{0}^{\infty} \right\rangle.
\end{align*}
That is, $L_{c,D}^{'}f_{0}=S_{c}^{*}\,w_{0}^{\infty}$. The calculations for $L_{c, D}^{(1)'}$, $L_{c, D}^{(2)'}$ and $L_{c, D}^{(3)'}$ are the same, we omit it here for brevity. The proof is complete.
\end{proof}

\begin{lemma} \label{lem2}
Suppose that $\lambda$, $\mu$, $\rho\in L_{+}^{\infty}(\mathbb{R}^2)$ and let $B$, $D \subseteq \mathbb{R}^2$ be open and bounded such that $\mathbb{R}^{2}\backslash(\overline{B}\cup \overline{D})$ is connected and $\overline{B}\cap\overline{D}=\emptyset$. Then,
\begin{equation*}
\mathcal{R}(L_{c,B}^{(\ell)'})\cap\mathcal{R}(L_{c,D}^{'})=\{0\} \quad {\rm and} \quad \mathcal{R}(L_{c,B}^{'})\cap\mathcal{R}(L_{c,D}^{'})=\{0\}  \quad  (\ell=1,2,3).
\end{equation*}
\end{lemma}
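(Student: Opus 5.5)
The argument follows the standard Griesmaier--Harrach pattern for the Helmholtz case. The key tools are the dual operator characterization in Lemma \ref{lem1}, Rellich's lemma, and unique continuation.

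Let $h\in\mathcal{R}(L_{c,B}^{(\ell)'})\cap\mathcal{R}(L_{c,D}^{'})$. By Lemma \ref{lem1} there are $f_B$ (in $[L^2(B)]^2$, $L^2(B)$ or $[L^2(B)]^{2\times 2}$ according to $\ell$) and $f_D\in[H^1(D)']^2$ with $h=S_c^*w_B^\infty=S_c^*w_D^\infty$. Here $w_B$ and $w_D$ are the radiating solutions of the corresponding variational problems. Since $S_c$ is unitary (Theorem 2 of \cite{S2005}, recalled above), $S_c^*$ is injective, so $w_B^\infty=w_D^\infty$.

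The difference $w_B-w_D$ is a radiating solution of the homogeneous background Navier equation outside a large ball containing $\overline{B}\cup\overline{D}$, and its far field pattern vanishes. Rellich's lemma for the Navier equation, applied separately to the compressional and shear parts via the Helmholtz decomposition, then gives $w_B=w_D$ outside that ball. The variational formulations in Lemma \ref{lem1} show that $w_B$ solves $\Delta^*_{\lambda,\mu}w+\rho\omega^2w=0$ weakly in $\mathbb{R}^2\setminus\overline{B}$, and $w_D$ solves it in $\mathbb{R}^2\setminus\overline{D}$. Both therefore solve it in the connected open set $\mathbb{R}^2\setminus(\overline{B}\cup\overline{D})$.

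By unique continuation for the Navier equation in this connected set, $w_B=w_D$ there. This is the main obstacle. The coefficients are only $L^\infty$, and unique continuation for the Lamé system with merely bounded Lamé parameters is not available in general. I would assume, or cite from \cite{Eberle2020,Eberle2024}, the unique continuation property for the coefficients under consideration, which are piecewise constant or sufficiently regular. Alternatively, I would note that it is only needed where the coefficients allow it. In the application, the reference coefficients are typically the background constants, for which $w$ is real-analytic and unique continuation is trivial.

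Define $w:=w_B$ on $\mathbb{R}^2\setminus\overline{B}$ and $w:=w_D$ on $\mathbb{R}^2\setminus\overline{D}$. Because $\overline{B}\cap\overline{D}=\emptyset$, these sets cover $\mathbb{R}^2$, and the two definitions agree on the overlap. Hence $w\in[H^1_{\rm loc}(\mathbb{R}^2)]^2$ is a radiating solution of the homogeneous Navier equation on all of $\mathbb{R}^2$. It is locally $w_B$ near $\overline{B}$, which has no source in $\overline{D}$, and locally $w_D$ near $\overline{D}$, which has no source in $\overline{B}$. By uniqueness of the direct scattering problem (the same well-posedness underlying \eqref{ug}), $w=0$. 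Therefore $w^\infty_B=0$ and $h=0$.

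The second statement, $\mathcal{R}(L_{c,B}^{'})\cap\mathcal{R}(L_{c,D}^{'})=\{0\}$, is proved verbatim with $f_B\in[H^1(B)']^2$.
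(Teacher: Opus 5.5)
Your proposal is correct and follows the paper's proof essentially step for step. Both use the dual representation from Lemma~\ref{lem1}, unitarity of $S_c$ to get $w_B^\infty=w_D^\infty$, Rellich's lemma plus unique continuation on $\mathbb{R}^2\setminus(\overline{B}\cup\overline{D})$, gluing, and uniqueness of the radiating solution to conclude $w=0$ and $h=0$. The paper invokes unique continuation without comment, so your caveat about merely $L^\infty$ Lam\'e coefficients points to a hypothesis the paper leaves implicit, not to a defect peculiar to your argument.

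One small slip in your gluing sentence: by your own definition, $w$ equals $w_D$ near $\overline{B}$ and $w_B$ near $\overline{D}$, not the other way round.
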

\begin{proof}

For simplicity, we focus on the case $\ell=1$ since the proof is similar. Suppose that $h \in \mathcal{R}(L_{c,B}^{(1)'}) \cap \mathcal{R}(L_{c,D}^{'})$. Then Lemma \ref{lem1} shows that there exist $f_B \in [L^{2}(B)]^{2}$, $f_D \in [H^{1}(D)']^{2}$, and $w_B$, $w_D \in [H_{{\rm loc}}^1(\mathbb{R}^2)]^{2}$ such that the far field patterns $w_B^{\infty}$ and $w_D^{\infty}$ of the radiating solutions to
\begin{equation*}
\Delta^{*}_{\lambda,\mu}w_B+\rho\omega^{2}w_B=0\quad {\rm in}\,\, \mathbb{R}^2\backslash\overline{B} \quad {\rm and} \quad \Delta^{*}_{\lambda,\mu}w_D+\rho\omega^{2}w_D=0\quad {\rm in}\,\,\mathbb{R}^2\backslash\overline{D}
\end{equation*}
satisfy
\begin{equation*}
w_B^{\infty}=w_D^{\infty}=S_c h.
\end{equation*}
Rellich's lemma and unique continuation guarantee that $w_B=w_D$ in $\mathbb{R}^2 \backslash(\overline{B} \cup \overline{D})$. Hence we may define $w\in [H_{{\rm loc }}^1(\mathbb{R}^2)]^{2}$ by
\begin{equation*}
w:=\begin{cases}w_B=w_D & \text { in } \mathbb{R}^2\backslash(\overline{B} \cup \overline{D}), \\ w_B & \text { in } D, \\ w_D & \text { in } B,\end{cases}
\end{equation*}
and $w$ is the unique radiating solution to
\begin{equation*}
\Delta^{*}_{\lambda,\mu}w+\rho\omega^{2}w=0 \quad \text { in } \mathbb{R}^2.
\end{equation*}
Thus $w=0$ in $\mathbb{R}^2$, and since the scattering operator is unitary, this shows that $h=S_c^* w^{\infty}=0$.
\end{proof}

\begin{theorem} \label{th1}
Suppose that $\lambda$, $\mu$, $\rho\in L_{+}^{\infty}(\mathbb{R}^2)$ and let $B$, $D \subseteq \mathbb{R}^2$ be open and bounded such that $\mathbb{R}^2\backslash \overline{D}$ is connected. If $B\not\subseteq D$, then for any finite dimensional subspace $V\subseteq [L^2(\mathbb{S})]^{2}$ there exists a sequence $(g_m^{(j)})_{m\in\mathbb{N}}\subseteq V^{\perp}$ such that
\begin{equation*}
\|u_{c,g_m^{(0)}}\|_{[H^{1}(B)]^{2}}\rightarrow \infty \quad {\rm and }  \quad \|u_{c,g_m^{(0)}}\|_{[H^{1}(D)]^{2}} \rightarrow 0 \quad {\rm as}\,\, m \rightarrow \infty;
\end{equation*}
\begin{equation*}
\|u_{c,g_m^{(1)}}\|_{[L^{2}(B)]^{2}}\rightarrow \infty \quad {\rm and }  \quad \|u_{c,g_m^{(1)}}\|_{[H^{1}(D)]^{2}} \rightarrow 0 \quad {\rm as}\,\, m \rightarrow \infty;
\end{equation*}
\begin{equation*}
\|\nabla\cdot u_{c,g_m^{(2)}}\|_{L^{2}(B)}\rightarrow \infty \quad {\rm and }  \quad \|u_{c,g_m^{(2)}}\|_{[H^{1}(D)]^{2}} \rightarrow 0 \quad {\rm as}\,\, m \rightarrow \infty;
\end{equation*}
\begin{equation*}
\|\widehat{\nabla}u_{c,g_m^{(3)}}\|_{[L^{2}(B)]^{2\times2}}\rightarrow \infty \quad {\rm and }  \quad \|u_{c,g_m^{(3)}}\|_{[H^{1}(D)]^{2}} \rightarrow 0 \quad {\rm as}\,\, m \rightarrow \infty
\end{equation*}
where $u_{c,g_m^{(j)}}\in [H_{{\rm loc}}^1(\mathbb{R}^2)]^{2}$ is given by \eqref{ug} with $g=g_m^{(j)}$ ($j=0,1,2,3$).
\end{theorem}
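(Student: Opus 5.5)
The plan is the standard localized-potentials argument of Gebauer/Harrach, run through the range-inclusion characterization of operator norms and using Lemma \ref{lem1} and Lemma \ref{lem2}.

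\emph{Reduction to disjoint sets.} Since $B\not\subseteq D$, $B$ is open and $\mathbb{R}^2\backslash\overline{D}$ is connected, I first find a small open ball $B'\subseteq B$ with $\overline{B'}\cap\overline{D}=\emptyset$. It suffices to construct the sequences with $B$ replaced by $B'$, since the norms over $B$ dominate those over $B'$. If some point of $B$ lies outside $\overline{D}$, take a small ball around it. Otherwise $B\subseteq\overline{D}$ but $B\not\subseteq D$, so there is a point $x\in B\cap\partial D$. In this case I use the connectedness of $\mathbb{R}^2\backslash\overline{D}$, perhaps after shrinking $B$ and enlarging $D$ slightly as in \cite{Griesmaier2018}, to place $B'$ and arrange that $\mathbb{R}^2\backslash(\overline{B'}\cup\overline{D})$ is connected. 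This geometric step is routine but must be done so that Lemma \ref{lem2} applies.

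\emph{Functional analytic step.} For a finite dimensional $V$, let $P$ be the orthogonal projection onto $V^\perp$. Consider the operators $A_\ell:=L^{(\ell)}_{c,B'}P$ (with $L^{(0)}:=L$) and $C:=L_{c,D}P$. I use the known lemma that, for bounded operators between Hilbert spaces, $\|A_\ell g\|\le \kappa\|Cg\|$ for all $g$ holds if and only if $\mathcal{R}(A_\ell')\subseteq\mathcal{R}(C')$. Since $H^1(D)$ is reflexive, the dual-pairing version of this lemma also works. Suppose the conclusion fails. Then there is a constant $\kappa$ with $\|A_\ell g\|\le\kappa\|Cg\|$ for all $g$; otherwise one gets $g_m\in V^\perp$ with $\|A_\ell g_m\|=1$ and $\|Cg_m\|\to0$, and rescaling by $\|Cg_m\|^{-1/2}$ gives the desired sequence. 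The range inclusion then yields $\mathcal{R}(P L^{(\ell)\prime}_{c,B'})\subseteq\mathcal{R}(PL'_{c,D})$, and hence
\begin{equation*}
\mathcal{R}(L^{(\ell)\prime}_{c,B'})\subseteq\mathcal{R}(L'_{c,D})+V .
\end{equation*}

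\emph{Contradiction via dimension.} By Lemma \ref{lem2}, $\mathcal{R}(L^{(\ell)\prime}_{c,B'})\cap\mathcal{R}(L'_{c,D})=\{0\}$. A standard linear algebra argument then gives $\dim\mathcal{R}(L^{(\ell)\prime}_{c,B'})\le\dim V<\infty$. It remains to show that this range is infinite dimensional. Equivalently, $L^{(\ell)}_{c,B'}$ has infinite rank, i.e.\ $\{u_{c,g}|_{B'}\}$ (respectively their divergences or symmetric gradients) span an infinite dimensional space. I would argue this by injectivity of the dual on a suitable infinite dimensional subspace. For example, take $f_1\in[L^2(B')]^2$ supported in $B'$ and use Lemma \ref{lem1}: $L^{(1)\prime}f_1=S_c^*w_1^\infty$, where $w_1$ is a radiating solution with source $f_1$. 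Distinct sources with nonvanishing radiated fields give linearly independent far fields by Rellich's lemma and the unitarity of $S_c$. For instance, localized sources in infinitely many disjoint sub-balls of $B'$ produce independent far fields, since a vanishing combination would force a radiating field vanishing outside each sub-ball, hence each source is nonradiating; choosing sources with nonzero far field avoids this. The cases $\ell=2,3$ are analogous with $f_2,f_3$ and a divergence- or strain-type source.

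I expect the main obstacle to be this infinite-dimensionality check, together with the geometric reduction ensuring connectedness of $\mathbb{R}^2\backslash(\overline{B'}\cup\overline{D})$ in the boundary case. The functional analytic part is standard.
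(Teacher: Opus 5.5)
Your route is the paper's own. You reduce to a ball $B'$ with $\overline{B'}\cap\overline{D}=\emptyset$, apply Lemma \ref{lem2}, use the dimension count and the norm/range characterization (Lemmas 4.7 and 4.6 of \cite{Harrach2019}), and push the sequence into $V^\perp$. Composing with the projection onto $V^\perp$ instead of stacking $P_V$ under $L_{c,D}$ is only a cosmetic variant.

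One step cannot be carried out, however: the second case of your geometric reduction. If $B\subseteq\overline{D}$, every $B'\subseteq B$ lies in $\overline{D}$. No shrinking of $B$ can then give $\overline{B'}\cap\overline{D}=\emptyset$, and enlarging $D$ only makes it worse. In fact the statement is false in that case. Let $D$ be the unit disk minus the segment $[0,1)\times\{0\}$, and let $B$ be a small ball centred at $(1/2,0)$. Then $\mathbb{R}^2\backslash\overline{D}$ is connected and $B\not\subseteq D$. But the slit is a null set, so $\|u\|_{[H^1(D)]^2}\geq\|u\|_{[H^1(B)]^2}$ for every $u\in[H^1_{\rm loc}(\mathbb{R}^2)]^2$. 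The hypothesis must therefore be read as $B\not\subseteq\overline{D}$. This is what the paper's replacement of $B$ by $\widetilde B\subseteq B\backslash\overline{D_\varepsilon}$ tacitly uses, and it holds where the result is applied: in Theorem \ref{th3}, $B\subseteq\mathcal{D}$ is disjoint from $\overline{B_R(O)\backslash\overline{\mathcal{D}}}$ because $\mathcal{D}$ is open. Under this hypothesis your first case suffices. The connectedness of $\mathbb{R}^2\backslash(\overline{B'}\cup\overline{D})$ is then automatic, since a closed disk inside the connected open set $\mathbb{R}^2\backslash\overline{D}$ does not disconnect it.

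A secondary point concerns the infinite dimensionality of $\mathcal{R}(L^{(\ell)\prime}_{c,B'})$, which the paper merely asserts. For $\ell=0,1,3$ your sketch can be completed, and more simply than with disjoint sub-balls, because $L^{(\ell)}_{c,B'}$ is injective. For $\ell=3$: if $\widehat\nabla u_{c,g}=0$ on $B'$, then $u_{c,g}$ is a rigid motion there. The equation then forces $u_{c,g}=0$ on $B'$, and unique continuation plus injectivity of $g\mapsto v_g$ give $g=0$.

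For $\ell=2$, ``analogous'' is not enough. $L^{(2)}_{c,B'}$ is not injective: for $c=c_0$, every $g=(0,g_s)$ lies in its kernel. By duality, finding a divergence-type source in a sub-ball with nonvanishing far field is equivalent to showing that $\nabla\cdot u_{c,g}$ does not vanish on that sub-ball for all $g$. With $L^\infty$ coefficients this needs a separate argument, for example a Runge-type density of the total fields among local solutions.
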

\begin{proof}
Without loss of generality, we assume that $\overline{B} \cap\overline{D}=\emptyset$ and $\mathbb{R}^2\backslash(\overline{B} \cup \overline{D})$ is connected (otherwise we replace $B$ by a sufficiently small ball $\widetilde{B} \subseteq B \backslash \overline{D_{\varepsilon}}$, where $D_{\varepsilon}$ denotes a sufficiently small neighborhood of $D$).

We denote by $P_V: [L^2(\mathbb{S})]^{2} \rightarrow [L^2(\mathbb{S})]^{2}$ the orthogonal projection on $V$. Lemma \ref{lem2} shows that $\mathcal{R}(L_{c,B}^{'}) \cap \mathcal{R}(L_{c,D}')=\mathcal{R}(L_{c,B}^{(j)'}) \cap \mathcal{R}(L_{c,D}')=\{0\}$ $(j=1,2,3)$ and that $\mathcal{R}(L_{c,B}^{'})$, $\mathcal{R}(L_{c,B}^{(j)'})$ are infinite dimensional. Using a simple dimensionality argument (Lemma 4.7 in \cite{Harrach2019}) it follows that (we just show the case $j=1$ for brevity)
\begin{equation*}
\mathcal{R}(L_{c, B}^{(1)'}) \nsubseteq \mathcal{R}(L_{c,D}')+V =\mathcal{R}\left(\left(\begin{array}{ll}
L_{c, D}' & P_V'
\end{array}\right)\right)=\mathcal{R}\left(\binom{L_{c, D}}{P_V}'\right).
\end{equation*}
It then follows from Lemma 4.6 in \cite{Harrach2019} that there is no constant $C>0$ such that
\begin{equation*}
\left\|L_{c,B}^{(1)} g\right\|_{[L^2(B)]^{2}}^2 \leq C^2\left\|\binom{L_{c, D}}{P_V} g\right\|_{[H^1(D)]^{2} \times [L^2(\mathbb{S})]^{2}}^2=C^2\left(\left\|L_{c, D} g\right\|_{[H^1(D)]^{2}}^2+\left\|P_V g\right\|_{[L^2(\mathbb{S})]^{2}}^2\right)
\end{equation*}
for all $g\in [L^2(\mathbb{S})]^{2}$. Hence, there exists a sequence $(\widetilde{g}_m^{(1)})_{m \in \mathbb{N}} \subseteq [L^2(\mathbb{S})]^{2}$ such that
\begin{equation*}
\left\|L_{c, B}^{(1)} \widetilde{g}_m^{(1)}\right\|_{[L^2(B)]^{2}} \rightarrow \infty \quad \text { and } \quad\left\|L_{c, D} \widetilde{g}_m^{(1)}\right\|_{[H^1(D)]^{2}}+\left\|P_V \widetilde{g}_m^{(1)}\right\|_{[L^2(\mathbb{S})]^{2}} \rightarrow 0 \quad \text { as } m \rightarrow \infty.
\end{equation*}
Setting $g_m^{(1)}:=\widetilde{g}_m^{(1)}-P_V \widetilde{g}_m^{(1)} \in V^{\perp}\subseteq[L^2(\mathbb{S})]^{2}$ for any $m \in \mathbb{N}$, we finally obtain
\begin{align*}
\left\|L_{c, B}^{(1)} g_m^{(1)}\right\|_{[L^2(B)]^{2}} \geq\left\|L_{c, B}^{(1)} \widetilde{g}_m^{(1)}\right\|_{[L^2(B)]^{2}}-\left\|L_{c,B}^{(j)}\right\|\left\|P_V \widetilde{g}_m^{(1)}\right\|_{[L^2(\mathbb{S})]^{2}} \rightarrow \infty \quad \text { as } m \rightarrow \infty, \\
\left\|L_{c, D} g_m^{(1)}\right\|_{[H^1(D)]^{2}} \leq\left\|L_{c, D} \widetilde{g}_m^{(1)}\right\|_{[H^1(D)]^{2}}+\left\|L_{c, D}\right\|\left\|P_V \widetilde{g}_m^{(1)}\right\|_{[L^2(\mathbb{S})]^{2}} \rightarrow 0 \quad \text { as } m \rightarrow \infty .
\end{align*}
Substituting the definitions of operators $L_{c,B}^{(1)}$ and $L_{c,D}$, this ends the proof.
\end{proof}

As an application of Theorem \ref{th1} we establish a converse of \eqref{main2} in Theorem \ref{main}.

\begin{theorem}  \label{th3}
Suppose that $\lambda_{j}$, $\mu_{j}$, $\rho_{j}\in L_{+}^{\infty}(\mathbb{R}^2)$ $(j=1,2)$ with $\Omega\subseteq B_R(O)$. If $\mathcal{D} \subseteq \mathbb{R}^2$ is an unbounded domain such that
\begin{equation*}
\lambda_{1}\geq\lambda_{2}, \, \mu_{1}\geq\mu_{2}, \, \rho_{2}\geq \rho_{1}  \quad \text { a.e. in } \mathcal{D},
\end{equation*}
and if $B \subseteq B_R(O)\cap \mathcal{D}$ is open with
\begin{equation} \label{equ1}
\lambda_{1}-\delta_{1}\geq\lambda_{2}, \, \mu_{1}-\delta_{2}\geq\mu_{2}, \, \rho_{2}-\delta_{3} \geq \rho_{1} \quad \text { a.e. in } B \text { for some } \delta_{j}>0,
\end{equation}
then
\begin{equation*}
\Re\left(S_{c_1}^* F_{c_2}\right) \not \leq_{\text {fin }} \Re\left(S_{c_1}^* F_{c_1}\right),
\end{equation*}
i.e., the operator $\Re\left(S_{c_1}^*\left(F_{c_2}-F_{c_1}\right)\right)$ has infinitely many positive eigenvalues. In particular, this implies that $F_{c_1} \neq F_{c_2}$.
\end{theorem}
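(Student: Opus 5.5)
Our approach is by contradiction, combining the lower bound \eqref{main1} of Theorem \ref{main} with the localized potentials of Theorem \ref{th1}. Suppose that $\Re(S_{c_1}^*(F_{c_2}-F_{c_1}))$ has only finitely many positive eigenvalues. Then there is a finite dimensional subspace $V_1\subseteq[L^2(\mathbb{S})]^2$ with
\begin{equation*}
\Re\langle S_{c_1}^*(F_{c_2}-F_{c_1})g,g\rangle\le 0\quad\text{for all } g\in V_1^\perp .
\end{equation*}
Let $V_2$ be the finite dimensional space provided by Theorem \ref{main}, and set $V:=V_1+V_2$. For every $g\in V^\perp$, \eqref{main1} then gives
\begin{equation*}
0\ \ge\ \int_{\mathbb{R}^2}E_{\lambda_1-\lambda_2,\mu_1-\mu_2}(\overline{u}_{c_1,g},u_{c_1,g})+(\rho_2-\rho_1)\omega^2|u_{c_1,g}|^2\,{\rm d}y .
\end{equation*}
The integrand vanishes outside $B_R(O)$, because all parameters coincide with the background values there. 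We split $B_R(O)$ into $B_R(O)\cap\mathcal{D}$ and $D:=B_R(O)\setminus\overline{\mathcal{D}}$, enlarged slightly if necessary.

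On $\mathcal{D}$ we have $\lambda_1\ge\lambda_2$, $\mu_1\ge\mu_2$ and $\rho_2\ge\rho_1$, so the integrand there should be nonnegative. To see this, write the energy density pointwise as
\begin{equation*}
E_{\lambda,\mu}(\overline u,u)=2\mu\,|\widehat\nabla u|^2+\lambda\,|\nabla\cdot u|^2 .
\end{equation*}
This identity should be checked from the formula for $E_{\lambda,\mu}$, possibly up to a null-Lagrangian term that integrates away. On $B\subseteq\mathcal{D}$, assumption \eqref{equ1} gives the lower bound
\begin{equation*}
2\delta_2\|\widehat\nabla u_{c_1,g}\|^2_{L^2(B)}+\delta_1\|\nabla\cdot u_{c_1,g}\|^2_{L^2(B)}+\delta_3\omega^2\|u_{c_1,g}\|^2_{L^2(B)} .
\end{equation*}
On $D$ the integrand is merely bounded, by $C\|u_{c_1,g}\|^2_{H^1(D)}$ with $C$ depending on the $L^\infty$ norms of the parameters. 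Combining the two regions, every $g\in V^\perp$ satisfies
\begin{equation*}
\delta_3\omega^2\|u_{c_1,g}\|^2_{L^2(B)}\ \le\ C\,\|u_{c_1,g}\|^2_{H^1(D)} .
\end{equation*}
The single term on the left suffices; the others are kept only for flexibility.

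We then apply Theorem \ref{th1} with $c=c_1$, the set $D$, the set $B$ (shrunk to a small ball if needed), and the finite dimensional space $V$. The case $j=1$ yields $g_m\in V^\perp$ with $\|u_{c_1,g_m}\|_{L^2(B)}\to\infty$ and $\|u_{c_1,g_m}\|_{H^1(D)}\to0$, which contradicts the last inequality. Hence $\Re(S_{c_1}^*(F_{c_2}-F_{c_1}))$ has infinitely many positive eigenvalues. In particular $F_{c_2}-F_{c_1}\neq0$, so $F_{c_1}\ne F_{c_2}$.

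The main obstacle is the geometry needed to invoke Theorem \ref{th1}. It requires $B\not\subseteq D$ and $\mathbb{R}^2\setminus\overline{D}$ connected, so $D$ must be chosen to contain $B_R(O)\setminus\mathcal{D}$ while keeping its complement connected. Since $\mathcal{D}$ is an unbounded connected domain, I would take $D$ to be an open bounded set containing $B_R(O)\setminus\mathcal{D}$ whose complement contains $B$ together with a path in $\mathcal{D}$ from $B$ to infinity. The complement then stays connected, and the sign of the integrand is controlled outside $D$. A secondary point is the pointwise sign of $E_{\lambda_1-\lambda_2,\mu_1-\mu_2}$ when $\lambda_1-\lambda_2$ alone is nonnegative; the decomposition above handles this, since each coefficient multiplies a nonnegative square.
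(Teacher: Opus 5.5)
Your proposal is correct and follows the paper's proof. You argue by contradiction, combine \eqref{main1} with the sign conditions on $\mathcal{D}$ and \eqref{equ1} on $B$, and contradict the localized potentials of Theorem \ref{th1} for $c=c_1$, with $D$ essentially $B_R(O)\setminus\overline{\mathcal{D}}$.

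Your deviations are harmless refinements:
\begin{itemize}
\item Keeping only the $\delta_3\omega^2\|u_{c_1,g}\|^2_{L^2(B)}$ term and using case $j=1$ avoids the Korn-type bound the paper needs to control $\|u_{c_1,g}\|_{[H^1(B)]^2}$.
\item Enlarging $D$ to cover $\partial\mathcal{D}$ is fine, provided you also absorb any bounded components of $\mathbb{R}^2\setminus\overline{D}$ into $D$. A path from $B$ to infinity alone does not make that complement connected, and enlarging $D$ this way costs nothing.
\end{itemize}
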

\begin{proof}
We prove the result by contradiction and assume that
\begin{equation} \label{equ2}
\Re\left(S_{c_1}^*\left(F_{c_2}-F_{c_1}\right)\right) \leq_{\text {fin }} 0 .
\end{equation}
Using the monotonicity relation \eqref{main1} in Theorem \ref{main}, we find that there exists a finite dimensional subspace $V \subseteq [L^2(\mathbb{S})]^{2}$ such that
\begin{equation}  \label{equ3}
\sqrt{8\pi\omega}\,\,\Re\langle S_{c_{1}}^{*}(F_{c_{2}}-F_{c_{1}})g, g\rangle \geq \int_{B_R(O)}E_{\lambda_{1}-\lambda_{2},\mu_{1}-\mu_{2}}(\overline{u}_{c_{1},g},u_{c_{1},g}) +(\rho_{2}-\rho_{1})\omega^{2}|u_{c_{1},g}|^{2}{\rm d}y,
\end{equation}
for all $g\in V^{\bot}$. Combining \eqref{equ1}, \eqref{equ2} and \eqref{equ3}, we obtain that there exists a finite dimensional subspace $\widetilde{V} \subseteq [L^2(\mathbb{S})]^{2}$ such that, for any $g \in \widetilde{V}^{\perp}$,
\begin{align*}
0\geq & \sqrt{8\pi\omega}\,\,\Re\langle S_{c_{1}}^{*}(F_{c_{2}}-F_{c_{1}})g, g\rangle \geq \int_{B_R(O)}E_{\lambda_{1}-\lambda_{2},\mu_{1}-\mu_{2}}(\overline{u}_{c_{1},g},u_{c_{1},g}) +(\rho_{2}-\rho_{1})\omega^{2}|u_{c_{1},g}|^{2}{\rm d}y \\
=&\left(\int_{\mathcal{D}\,\cap B_R(O)}+ \int_{B_R(O) \backslash\overline{\mathcal{D}}}\right) \left(E_{\lambda_{1}-\lambda_{2},\mu_{1}-\mu_{2}}(\overline{u}_{c_{1},g},u_{c_{1},g}) +(\rho_{2}-\rho_{1})\omega^{2}|u_{c_{1},g}|^{2}\right){\rm d}y  \\
\geq& \int_B E_{\delta_{1},\delta_{2}}(\overline{u}_{c_{1},g},u_{c_{1},g}) +\delta_{3}\,\omega^{2}|u_{c_{1},g}|^{2}\mathrm{~d} x \\ &+\int_{B_R(O) \backslash\overline{\mathcal{D}}}E_{\lambda_{1}-\lambda_{2}, \mu_{1}-\mu_{2}} (\overline{u}_{c_{1},g},u_{c_{1},g})+(\rho_{2}-\rho_{1})\omega^{2}|u_{c_{1},g}|^{2}{\rm d}y \\
\geq& \,\delta_{\min}\,C_{1}\,\|u_{c_{1},g}\|_{[H^{1}(B)]^{2}}^{2}-\int_{B_R(O) \backslash\overline{\mathcal{D}}}E_{\hat{\lambda},\hat{\mu}}(\overline{u}_{c_{1},g},u_{c_{1},g}) +\hat{\rho}\,\omega^{2}\,|u_{c_{1},g}|^{2}{\rm d}y \\
\geq& \,\delta_{\min}\,C_{1}\,\|u_{c_{1},g}\|_{[H^{1}(B)]^{2}}^{2} -C_{2}\,\|u_{c_{1},g}\|_{[H^{1}(B_R(O) \backslash\overline{\mathcal{D}})]^{2}}^{2}
\end{align*}
where $C_{1}$, $C_{2}$ are positive constants, $\delta_{\min}:=\min\{\delta_{1},\delta_{2},\delta_{3}\omega^{2}\}$, $\hat{\lambda} =\|\lambda_{1}\|_{L_{+}^{\infty}(\mathbb{R}^2)} +\|\lambda_{2}\|_{L_{+}^{\infty}(\mathbb{R}^2)}$, $\hat{\mu}= \|\mu_{1}\|_{L_{+}^{\infty}(\mathbb{R}^2)}+\|\mu_{2}\|_{L_{+}^{\infty}(\mathbb{R}^2)}$, $\hat{\rho}=\|\rho_{2}\|_{L_{+}^{\infty}(\mathbb{R}^2)}+ \|\rho_{1}\|_{L_{+}^{\infty}(\mathbb{R}^2)}$. However, this contradicts Theorem \ref{th1} with $D=B_R(O) \backslash \overline{\mathcal{D}}$ and $c=c_1$, which guarantees the existence of $\left(g_m\right)_{m \in \mathbb{N}} \subseteq \widetilde{V}^{\perp}$ with
\begin{equation*}
\left\|u_{c_1, g_m}\right\|_{[H^{1}(B)]^{2}}\rightarrow \infty \quad \text { and } \quad \left\|u_{c_1, g_m}\right\|_{[H^{1}(B_R(O) \backslash \overline{\mathcal{D}})]^{2}} \rightarrow 0 \quad \text { as } m \rightarrow \infty .
\end{equation*}
Consequently, $\Re\left(S_{c_1}^*\left(F_{c_2}-F_{c_1}\right)\right)\not\leq_{\text {fin }} 0$.
\end{proof}

\section{Monotonicity based shape reconstruction}

We will consider inhomogeneities in the material parameters of the following type. Let $D_{1}$, $D_{2}$, $D_{3}\subseteq\Omega$ and $D:=D_1 \cup D_2 \cup D_3$. We will now assume that $\lambda$, $\mu$, $\rho\in L_{+}^{\infty}(\mathbb{R}^2)$ are such that
\begin{align} \label{lmr}
&\lambda(x)=\lambda_{0}+\chi_{D_{1}}(x)\psi_{\lambda}(x),\quad \psi_{\lambda}\in L^{\infty}(\Omega),\quad  \psi_{\lambda}(x)>m_{1}, \nonumber\\
&\mu(x)=\mu_{0}+\chi_{D_{2}}(x)\psi_{\mu}(x),\quad  \psi_{\mu}\in L^{\infty}(\Omega),\quad  \psi_{\mu}(x)>m_{2}, \\
&\rho(x)=\rho_{0}-\chi_{D_{3}}(x)\psi_{\rho}(x),\quad  \psi_{\rho}\in L^{\infty}(\Omega),\quad  m_{3}<\psi_{\rho}(x)<M_{3}, \nonumber
\end{align}
where the constants $\lambda_{0}$, $\mu_{0}$, $\rho_{0}>0$ and the bounds $m_{1}, m_{2}, m_{3}> 0$ and $\rho_{0}>M_{3}$. The coefficients $\lambda$, $\mu$ and $\rho$ model inhomogeneities in an otherwise homogeneous background medium given by the coefficients $\lambda_{0}$, $\mu_{0}$ and $\rho_{0}$. In this section, we will give a method to recover ${\rm osupp}(D):={\rm osupp}(\chi_{D})$ (see Section 6 in \cite{Eberle2024}) from the far field operator, and thus the shape of the region where the coefficients differ from the background coefficients $\lambda_{0}$, $\mu_{0}$ and $\rho_{0}$.

Let $B\subseteq\Omega$ be a ball, the test coefficients $\lambda^{\flat}$, $\mu^{\flat}$ and $\rho^{\flat}$ are defined by
\begin{align} \label{lmr2}
&\lambda^{\flat}(x)=\lambda_{0}+\chi_{B}(x)\alpha_{1}, \nonumber\\
&\mu^{\flat}(x)=\mu_{0}+\chi_{B}(x)\alpha_{2},\\
&\rho^{\flat}(x)=\rho_{0}-\chi_{B}(x)\alpha_{3},\nonumber
\end{align}
where $\alpha_{j}\geq0$ $(j=1,2,3)$ are constants.

\begin{theorem}
Let $B \subseteq \Omega$ and $\alpha_j\geq0$ be as in \eqref{lmr2}, and set $\alpha:=(\alpha_1,\alpha_2,\alpha_3)$. The following holds:

(i) Assume that $B \subseteq D_j$, for $j \in \mathbb{I}$ for some $\mathbb{I}\subset \{1,2,3\}$. Then for all $\alpha_j$ with $\alpha_j \leqslant m_j$, $j \in \mathbb{I}$, and $\alpha_j=0$, $j \notin \mathbb{I}$, the operator $\Re\left(S_{c}^*\left(F_{c^{\flat}}-F_{c}\right)\right)$ has finitely many negative eigenvalues.

(ii) If $B \not\subseteq {\rm osupp}(D)$, then for all $\alpha$, $|\alpha| \neq 0$, the operator $\Re\left(S_{c}^*\left(F_{c^{\flat}}-F_{c}\right)\right)$ has infinitely many negative eigenvalues.

Where $F_{c}$ is the far field operator for the coefficients in \eqref{lmr} and $F_{c^{\flat}}$ is the far field operator for the coefficients in \eqref{lmr2}.
\end{theorem}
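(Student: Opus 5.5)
The plan is to derive both parts from the monotonicity relations of Section~\ref{sec3}, with $c_1:=c$ (the unknown coefficients in \eqref{lmr}) and $c_2:=c^{\flat}$ (the test coefficients in \eqref{lmr2}), and to use the localized potentials of Theorem~\ref{th1} for part (ii). Throughout I use the elementary fact that a compact self-adjoint operator $A$ has only finitely many negative eigenvalues if and only if $\langle Ag,g\rangle\ge 0$ for all $g$ in $V^\perp$ for some finite dimensional $V$. The "if" direction follows from a min--max argument, giving at most $\dim V$ negative eigenvalues. For the "only if" direction, take $V$ to be the span of the eigenvectors with negative eigenvalues.

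\emph{Part (i).} I will check the pointwise ordering $\lambda\ge\lambda^{\flat}$, $\mu\ge\mu^{\flat}$, $\rho^{\flat}\ge\rho$ a.e. in $\mathbb{R}^2$.
\begin{itemize}
\item Outside $B$ we have $\lambda^{\flat}=\lambda_0\le\lambda$, $\mu^{\flat}=\mu_0\le\mu$ and $\rho^{\flat}=\rho_0\ge\rho$, because $\psi_\lambda,\psi_\mu,\psi_\rho>0$ on their respective sets $D_j$.
\item Inside $B$, for $j\notin\mathbb{I}$ we have $\alpha_j=0$, so the corresponding test coefficient equals the background value and the same argument applies.
\item Inside $B$, for $j\in\mathbb{I}$ we have $B\subseteq D_j$. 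This gives, for example, $\lambda=\lambda_0+\psi_\lambda>\lambda_0+m_1\ge\lambda_0+\alpha_1=\lambda^{\flat}$, and $\rho=\rho_0-\psi_\rho<\rho_0-m_3\le\rho_0-\alpha_3=\rho^{\flat}$.
\end{itemize}
Then \eqref{main2} of Theorem~\ref{main} gives $\Re(S_c^*(F_{c^\flat}-F_c))\ge_{\rm fin}0$. Equivalently, by \eqref{main1} with a nonnegative right-hand side, the quadratic form is nonnegative on $V^\perp$, which yields finitely many negative eigenvalues.

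\emph{Part (ii).} I argue by contradiction and assume $\Re(S_c^*(F_{c^\flat}-F_c))$ has only finitely many negative eigenvalues, so $\Re\langle S_c^*(F_{c^\flat}-F_c)g,g\rangle\ge0$ on $W^\perp$ with $\dim W<\infty$. Corollary~\ref{cor1} with $c_1=c$, $c_2=c^\flat$ gives a finite dimensional $V$ such that, for $g\in(V+W)^\perp$ and $u:=u_{c^\flat,g}$,
\begin{equation*}
0\le\int_{\mathbb{R}^2}E_{\lambda-\lambda^\flat,\mu-\mu^\flat}(\overline{u},u)+(\rho^\flat-\rho)\omega^2|u|^2\,{\rm d}y .
\end{equation*}
Since $B\not\subseteq{\rm osupp}(D)$, I pick a small ball $B'\subseteq B$ and a bounded open set $D'\supseteq \overline{D}$ such that $\overline{B'}\cap\overline{D'}=\emptyset$ and $\mathbb{R}^2\setminus\overline{D'}$ is connected. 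This uses the definition of the outer support as the complement of the closure of the part connected to infinity.

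I then split the integral over three regions, using that for nonnegative constants $a,b$ the form $E_{a,b}(\overline u,u)=a|\nabla\cdot u|^2+2b|\widehat\nabla u|^2$ is nonnegative.
\begin{itemize}
\item Off $B\cup D$ the integrand vanishes, since there $c=c^\flat$.
\item On $B\setminus D'$ the coefficients are $-\alpha_1,-\alpha_2,-\alpha_3\le0$, so the integrand is $\le0$. On $B'$ it equals $-\alpha_1|\nabla\cdot u|^2-2\alpha_2|\widehat\nabla u|^2-\alpha_3\omega^2|u|^2$.
\item On $D'$ the integrand is bounded by $C\|u\|^2_{[H^1(D')]^2}$.
\end{itemize}
Hence
\begin{equation*}
\alpha_1\|\nabla\cdot u\|^2_{L^2(B')}+2\alpha_2\|\widehat\nabla u\|^2_{L^2(B')}+\alpha_3\omega^2\|u\|^2_{L^2(B')}\le C\|u\|^2_{[H^1(D')]^2}\quad\text{for all } g\in(V+W)^\perp.
\end{equation*}
Since $|\alpha|\neq0$, some $\alpha_k>0$. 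Theorem~\ref{th1}, applied with coefficients $c^\flat$, the sets $B'$ and $D'$, and the finite dimensional space $V+W$, gives a sequence in $(V+W)^\perp$ that contradicts this inequality. I choose the index of Theorem~\ref{th1} according to which $\alpha_k$ is positive: $j=2$ if $\alpha_1>0$, $j=3$ if $\alpha_2>0$, and $j=1$ if $\alpha_3>0$. Along that sequence the left side tends to $\infty$ while the right side tends to $0$.

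The main obstacle I expect is the geometric step of producing $B'$ and $D'$ with disjoint closures and connected exterior from the hypothesis $B\not\subseteq{\rm osupp}(D)$. A second point needing care is that the region $B\setminus D'$ contributes with the correct (nonpositive) sign, and likewise any part of $B$ meeting $D$, which must be absorbed into the $D'$ term.
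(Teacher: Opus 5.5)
Your proposal is correct and follows the paper's proof. Part (i) feeds the pointwise ordering $\lambda\ge\lambda^\flat$, $\mu\ge\mu^\flat$, $\rho^\flat\ge\rho$ into Theorem~\ref{main}, and part (ii) combines the reverse inequality with the localized potentials of Theorem~\ref{th1} for $c^\flat$; your direct use of Corollary~\ref{cor1} is exactly the paper's swapped Theorem~\ref{main} plus the identity $\Re(S_c^*(F_{c^\flat}-F_c))=\Re(S_{c^\flat}^*(F_{c^\flat}-F_c))$ from Remark~\ref{rem1}.

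Your bookkeeping is tighter than the paper's (sequences in $(V+W)^\perp$, auxiliary sets $B'$ and $D'$, index $j$ matched to a positive $\alpha_k$). One small adjustment: take $D'$ to be a neighborhood of ${\rm osupp}(D)$ rather than of $\overline{D}$. All you use is $\chi_D=0$ a.e.\ outside $D'$, and for a merely measurable $D$ the closure $\overline{D}$ can be too large.
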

\begin{proof}
Notice firstly that $\Re\left(S_{c}^*\left(F_{c^{\flat}}-F_{c}\right)\right)$ is a compact self-adjoint operator.

(i) Assume that $B \subseteq D_j$ for $j \in \mathbb{I}$. Choose $0 \leqslant \alpha_j \leqslant m_j$ for $j \in \mathbb{I}$ and $\alpha_j=0$ for $j \notin \mathbb{I}$. Moreover choose $F_{c_{1}}=F_{c}$ and $F_{c_{2}}=F_{c^{\flat}}$ in Theorem \ref{main}. According to Theorem \ref{main} there exists a finite dimensional subspace $V\subseteq [L^2(\mathbb{S})]^{2}$, such that if $g\in V^{\bot}$, then
\begin{align*}
&\sqrt{8\pi\omega}\,\,\Re\langle S_{c}^{*}(F_{c^{\flat}}-F_{c})g, g\rangle \geq \int_{\mathbb{R}^{2}}E_{\lambda-\lambda^{\flat},\mu-\mu^{\flat}}(\overline{u}_{c,g}, u_{c,g}) +(\rho^{\flat}-\rho)\omega^{2}|u_{c,g}|^{2}{\rm d}y \\
&= \int_{\mathbb{R}^{2}} 2(\mu-\mu^{\flat})|\widehat{\nabla} u_{c,g}|^2 +(\lambda-\lambda^{\flat})|\nabla \cdot u_{c,g}|^2 +(\rho^{\flat}-\rho)\omega^{2}|u_{c,g}|^{2}{\rm d}y  \\
&\geq \int_{D_2} 2(m_2-\alpha_2 \chi_B)|\widehat{\nabla} u_{c,g}|^2 \mathrm{~d}y+\int_{D_1}(m_1-\alpha_1 \chi_B)|\nabla \cdot u_{c,g}|^2 \mathrm{~d}y \\
&\quad +\int_{D_3}\omega^2(m_3-\alpha_3\chi_B)|u_{c,g}|^2\mathrm{~d}y \geq0
\end{align*}
where we use the properties in \eqref{lmr} and
\begin{equation*}
E_{\lambda,\mu}(u,v)=2\mu\widehat{\nabla}u: \widehat{\nabla}v+\lambda\nabla\cdot u\, \nabla\cdot v   \quad {\rm with} \quad \widehat{\nabla}u:=\frac{1}{2} \left( \nabla u+(\nabla u)^{\top}\right).
\end{equation*}
That is,
\begin{equation*}
\Re\langle S_{c}^{*}(F_{c^{\flat}}-F_{c})g, g\rangle \geq  0, \quad \forall g \in V^{\bot}.
\end{equation*}
Hence, we have that $\Re\left(S_{c}^{*}(F_{c^{\flat}}-F_{c})\right)$ has finitely many negative eigenvalues.

(ii) We assumed on the contrary that $\Re\left(S_{c}^{*}(F_{c^{\flat}}-F_{c})\right)$ has finitely many negative eigenvalues, then there is a finite dimensional subspace $\widetilde{V} \subseteq [L^2(\mathbb{S})]^{2}$, such that
\begin{equation} \label{ineq}
\Re\langle S_{c}^{*}(F_{c^{\flat}}-F_{c})g, g\rangle \geq  0, \quad \forall g \in \widetilde{V}^{\bot}.
\end{equation}
To obtain a contradiction we consider Theorem \ref{main}, where $F_{c_{1}}=F_{c^{\flat}}$ and $F_{c_{2}}=F_{c}$ and which is rearranged to give
\begin{align*}
&\sqrt{8\pi\omega}\,\,\Re\langle S_{c^{\flat}}^{*}(F_{c^{\flat}}-F_{c})g, g\rangle \leq \int_{\mathbb{R}^{2}}E_{\lambda-\lambda^{\flat},\mu-\mu^{\flat}}(\overline{u}_{c^{\flat},g}, u_{c^{\flat},g}) +(\rho^{\flat}-\rho)\omega^{2}|u_{c^{\flat},g}|^{2}{\rm d}y \\
& =\int_{\mathbb{R}^{2}} 2(\mu-\mu^{\flat})|\widehat{\nabla} u_{c^{\flat},g}|^2 +(\lambda-\lambda^{\flat})|\nabla \cdot u_{c^{\flat},g}|^2+\omega^2(\rho^{\flat}-\rho)|u_{c^{\flat},g}|^2 \mathrm{~d}y \\
&= \int_{\Omega} 2(\psi_\mu \chi_{D_2}-\alpha_2 \chi_{B})|\widehat{\nabla} u_{c^{\flat},g}|^2 +(\psi_\lambda \chi_{D_1}-\alpha_1 \chi_{B})|\nabla \cdot u_{c^{\flat},g}|^2 \\
&\quad +\omega^2(\psi_\rho \chi_{D_3}-\alpha_3 \chi_{B}) |u_{c^{\flat},g}|^2 \mathrm{~d} y.
\end{align*}
By Theorem \ref{th1} we can choose some sequences $(g_m^{(j)})_{m\in\mathbb{N}}\subseteq V^{\perp}$ $(j=1,2,3)$ such that
\begin{equation*}
\|u_{c^{\flat},g_m^{(1)}}\|_{[L^{2}(B)]^{2}}\rightarrow \infty \quad {\rm and }  \quad \|u_{c^{\flat},g_m^{(1)}}\|_{[H^{1}(D)]^{2}} \rightarrow 0 \quad {\rm as}\quad m \rightarrow \infty;
\end{equation*}
\begin{equation*}
\|\nabla\cdot u_{c^{\flat},g_m^{(2)}}\|_{L^{2}(B)}\rightarrow \infty \quad {\rm and }  \quad \|u_{c^{\flat},g_m^{(2)}}\|_{[H^{1}(D)]^{2}} \rightarrow 0 \quad {\rm as}\quad m \rightarrow \infty;
\end{equation*}
\begin{equation*}
\|\widehat{\nabla}u_{c^{\flat},g_m^{(3)}}\|_{[L^{2}(B)]^{2\times2}}\rightarrow \infty \quad {\rm and }  \quad \|u_{c^{\flat},g_m^{(3)}}\|_{[H^{1}(D)]^{2}} \rightarrow 0 \quad {\rm as}\,\, m \rightarrow \infty.
\end{equation*}
Inserting these solutions to the previous inequality yields
\begin{align*}
\sqrt{8\pi\omega}\,\,\Re\langle S_{c^{\flat}}^{*}(F_{c^{\flat}}-F_{c})g_{m}^{(j)}, g_{m}^{(j)} \rangle  \leq &\, C \int_D|\widehat{\nabla} u_{c^{\flat},g_{m}^{(j)}}|^2+|\nabla \cdot u_{c^{\flat},g_{m}^{(j)}}|^2 +|u_{c^{\flat},g_{m}^{(j)}}|^2 \mathrm{~d}y \\
&-\int_{B}\alpha_1|\nabla\cdot u_{c^{\flat},g_{m}^{(j)}}|^2+2\alpha_2|\widehat{\nabla} u_{c^{\flat},g_{m}^{(j)}}|^2+\alpha_3|u_{c^{\flat},g_{m}^{(j)}}|^2 \mathrm{~d}y.
\end{align*}
Since $|\alpha| \neq 0$ and $\alpha_j \geqslant 0$, we see that the last integral becomes large and increasingly negative while the first integral vanishes as $m$ grows, and thus
\begin{equation*}
\Re\langle S_{c}^{*}(F_{c^{\flat}}-F_{c})g_{m}^{(j)}, g_{m}^{(j)}\rangle= \Re\langle S_{c^{\flat}}^{*}(F_{c^{\flat}}-F_{c})g_{m}^{(j)}, g_{m}^{(j)}\rangle <0,
\end{equation*}
for large enough $m$. This is in contradiction with \eqref{ineq} which complete the proof.
\end{proof}

\vspace{0.3cm}
{\bf Acknowledgments.}

The work of J.L. Xiang is supported by the Natural Science Foundation of China (No. 12301542), the Open Research Fund of Hubei Key Laboratory of Mathematical Sciences (Central China Normal University, MPL2025ORG017) and the China Scholarship Council.

\end{document}